\definecolor{HalfGray}{gray}{0.55}
\definecolor{OliveGreen}{rgb}{0,.35,0}
\definecolor{webbrown}{rgb}{.6,0,0}
\definecolor{BrightViolet}{rgb}{0.5,0.2,0.8}
\definecolor{Maroon}{cmyk}{0, 0.87, 0.68, 0.32}
\definecolor{RoyalBlue}{cmyk}{1, 0.50, 0, 0.25}
\definecolor{Black}{cmyk}{0, 0, 0, 0}
\definecolor{ccccccc}{RGB}{204,204,204}
\definecolor{c808080}{RGB}{128,128,128}
\definecolor{c999999}{RGB}{153,153,153}
\definecolor{ce6e6e6}{RGB}{230,230,230}
\pgfplotsset{compat=newest}
\newsavebox{\measure@tikzpicture}
  \def\tikz@width{#1}%
  \def\tikzscale{1}\begin{lrbox}{\measure@tikzpicture}%
  \edef\tikzscale{\pgfmathresult}%
\newtheorem{theorem}{Theorem}[section]
\newtheorem{proposition}[theorem]{Proposition}
\newtheorem{corollary}[theorem]{Corollary}
\newtheorem{lemma}[theorem]{Lemma}
\theoremstyle{definition}
\newtheorem{definition}[theorem]{Definition}
\newtheorem{example}[theorem]{Example}
\theoremstyle{remark}
\newtheorem{remark}[theorem]{Remark}
\DeclareMathOperator*{\argmin}{arg\,min}
\renewcommand{\bar}{\overline}
\renewcommand{\geq}{\geqslant}
\renewcommand{\leq}{\leqslant}
\newcommand{\ps}[2]{\left\langle #1 \vert #2 \right\rangle}
\def\eqsp{\,}
\def\UMD{\textrm{UMD}}
\newcommand{\ensemble}[2]{\left\{#1\,:\eqsp #2\right\}}
\newcommand{\set}[2]{\ensemble{#1}{#2}}
\newcommandx{\Id}[1][1=n]{\operatorname{I}_{#1}}
\def\eqdef{:=}
\def\vt{{\vartheta}}
\def\prox{\hbox{$\Pi$}}
\def\X{{\mathcal{X}}}
\def\N{{\mathcal{N}}}
\newcommand{\be}{\begin{eqnarray}}
\newcommand{\ee}[1]{\label{#1}\end{eqnarray}}
\newcommand{\nn}{\nonumber \\}
\newcommand{\ese}{\end{eqnarray*}}
\newcommand{\bse}{\begin{eqnarray*}}
\newcommand{\half}{ \mbox{$\frac{1}{2}$}}
\def\rset{\mathbb{R}}
\renewcommand{\leq}{\leqslant}
\renewcommand{\geq}{\geqslant}
\DeclareMathOperator*{\Argmin}{Arg\,min}
\DeclareMathOperator*{\Argmax}{Arg\,max}
\def\qed{\hfill$\Box$}
\def\rset{\mathbb{R}}
\author{Anatoli Juditsky \and Joon Kwon \and Éric Moulines}
\date{\today}
\title{Unifying mirror descent and dual averaging}
\begin{document}

\begin{abstract}
We introduce and analyze a new family of first-order optimization algorithms which generalizes and unifies both mirror descent and dual averaging. Within the framework of this family, we define new algorithms for constrained optimization that combines the advantages of mirror descent and dual averaging. Our preliminary simulation study shows that these new algorithms significantly outperform available methods in some situations.
\end{abstract}
\maketitle

\setcounter{tocdepth}{1}
\tableofcontents

\section{Introduction}
Mirror descent algorithms were initially introduced as
first-order convex optimization algorithms, and were then extended to a
variety of (online) optimization problems.
Let us quickly recall the succession of ideas which have led to the
mirror descent algorithms.

Let us start with the most basic setting, in which the objective function
{$f:\,\rset^n\to \rset$} is convex on
$\rset^n$, differentiable, and admits a unique minimizer $x_*\in
\rset^n$. We focus on the construction of algorithms based on
first-order oracles (in other words, the algorithm is allowed to query the values of the objective $f(x_t)$
and of its gradient $\nabla f(x_t)$ at a search points $x_t\in \rset^n$)
and which outputs points where the value of the objective function
$f$ is provably close to the minimum $f_*=f(x_*)$. The most basic of such
algorithm is the (Euclidean) gradient descent, which starts at some point
$x_1\in \rset^n$ and iterates
\[ x_{t+1}=x_t-\gamma\nabla f(x_t),\quad t\geqslant 1, \]
where $\gamma > 0$ is the step-size. An equivalent way of
writing the above is the so-called \emph{proximal} formulation:
\[x_{t+1}= \argmin_{x\in \rset^n}\left\{ f(x_t)+\ps{\nabla f(x_t)}{x-x_t} +\frac{1}{2\gamma}\left\| x-x_t \right\|_2^2 \right\}, \]
where $x_{t+1}$ appears as the solution of a simplified minimization
problem where the objective function $f$ has been replaced by its linearization at $x_t$
\emph{plus} a Euclidean \emph{proximal term} $\frac{1}{2\gamma}\left\| x-x_t
\right\|_2^2$ which prevents the next iterate $x_{t+1}$ from being too far from
$x_t$. This algorithm is well-suited to assumptions regarding the
objective function $f$ which involve the Euclidean norm (e.g.\ if $\nabla f$
is bounded (or Lipschitz-continuous) with respect to the Euclidean norm).

The mirror descent algorithm, introduced in
\cite{nemirovski1979efficient,nemirovsky1983problem}, can be seen as an extension
of the above gradient descent, in which Euclidean proximal term is
replaced with a
\emph{Bregman divergence} \cite{bregman1967relaxation}:
\[ x_{t+1}=\argmin_{x\in \rset^n}\left\{ f(x_t)+\ps{\nabla f(x_t)}{x-x_t} +\frac{1}{\gamma}D_F(x,x_t) \right\}, \]
where, for any $x,x' \in \rset^n$ the Bregman divergence $D_F(\cdot,\cdot)$ associated
with  a differentiable strictly convex function $F:\rset^n\to \rset$ is defined as
\[ D_F(x',x):=F(x')-F(x)-\ps{\nabla F(x)}{x'-x} \eqsp. \]
Such algorithms, with a carefully chosen function $F$,
are used to better suit the geometry of the problem, for instance when
the objective function $f$ is Lipschitz-continuous or smooth with
respect to a non-Euclidean norm. One can see that the above mirror
descent iteration can be equivalently written, under appropriate assumptions on $F$,
\begin{equation}
\label{eq:primal-dual}
x_{t+1}=\nabla F^*(\nabla F(x_t)-\gamma\nabla f(x_t)),
\end{equation}
where $F^*$ is the Fenchel--Legendre transform of $F$,
\[ F^*(\vt)=\max_{x\in \rset^n}\left\{ \ps{\vt}{x}- F(x) \right\},\qquad \vt \in \rset^n.  \]

This formulation makes explicit the distinction between
the primal space of iterates $(x_t)_{t\geqslant 1}$ and
the dual space where the gradients $(\nabla f(x_t))_{t\geqslant 1}$
belong: search point $x_t$ is mapped from the primal into the dual space using
$\nabla F$, the gradient step is then performed in the dual space
($\nabla F(x_t)-\gamma\nabla f(x_t)$), and the point thus obtained is
finally mapped back into the primal space using $\nabla F^*$.

We now move on to \emph{constrained} problems. Let $\X\subset \rset^n$
be a closed convex set.  To force the
trajectory of the method to stay in $\X$, the mirror descent should be properly
adapted. Such modification can be implemented in at least two ways, which give
rise to two families of algorithms: \emph{mirror descent} (MD) which
can be traced back to the pioneering work
\cite[Chapter~3]{nemirovsky1983problem} and \emph{dual averaging} (DA)
introduced in \cite{nesterov2009primal,juditsky2005recursive}, sometimes called {\em lazy} mirror descent.
To illustrate the similarities and differences between MD\ and DA, we
here describe their implementation in the simple Euclidean case. The MD\ algorithm in this case corresponds to
the \emph{projected gradient descent}, 
in which, given an initial point $x_1\in \X$,  for $t\geqslant 1$,
 \[ y_{t+1}=x_t-\gamma\nabla f(x_t)\quad \text{and}\quad x_{t+1}=\operatorname{proj}_{\X}(y_{t+1}), \]
 where $\operatorname{proj}_{\X}$ denotes the Euclidean
 projection onto $\X$. In other words, it first performs a
 gradient step, then projects the point thus obtained onto the set
 $\X$; then the next gradient step is performed starting from $x_{t+1}$, and so on.

For a given initial point $\vt_1\in \rset^n$, the corresponding algorithm in
the DA\ family writes, for all $t\geqslant 1$:
\[ \vt_{t+1}=\vt_t-\gamma\nabla  f(x_t)\quad \text{and}\quad x_{t+1}=\operatorname{proj}_{\X}(\vt_{t+1}). \]
The difference with the projected gradient descent is that the
gradient increment is performed  from the \emph{unprojected} point $\vt_t$.

The MD\ and DA\ algorithms share similarities in their analysis
and in the guarantees they provide. However, their differences led
to the two families of algorithms being used and studied in different situations. For instance,
DA\  algorithms seem to be advantageous in distributed
problems \cite{dekel2012optimal,duchi2012dual}, and manifold
identification \cite{lee2012manifold,duchi2016local}. They are also believed to possess better averaging properties in the presence
of noise \cite{flammarion2017stochastic}.
On the other hand, MD\ is shown to provide better convergence rates in
some cases (e.g., in the case of smooth objective $f$, cf. \cite[Section~4.2]{flammarion2017stochastic}). MD\ also achieves the optimal rate in the adversarial multi-armed bandit problem \cite{audibert2009minimax,audibert2010regret} and the online combinatorial optimization problem with semi-bandit \cite{audibert2013regret} and bandit feedback \cite{bubeck2012towards,cohen2017tight}.

Algorithms of the mirror descent type were also transposed to provide solutions
for other problems with convex structure. We already mentioned
bandit problems. More generally, the mirror descent algorithms have
been successful in online learning, see
e.g.~\cite{zinkevich2003online,cesa2006prediction,shalev2007online,shalev2011online,rakhlin2009lecture,bubeck2011introduction,hazan2012convex}, when solving saddle-point problems and variational inequalities
\cite{nemirovski2004prox,nesterov2007dual,juditsky2011_2}, similar procedures were used
for estimator aggregation in statistical learning
\cite{juditsky2005recursive,juditsky2008learning}, etc.\footnote{We also refer to \cite[Appendix C]{mcmahan2017survey} for a discussion
comparing MD\ and DA.}

\paragraph{Main contribution.} In this paper, we introduce and study a new family of algorithms, which we refer to as
\emph{unified mirror descent} (UMD) which unifies and extends both
mirror descent and dual averaging. The general algorithm has the
property of offering at each step a set of possible iterations. 
We also construct, in the context of this new family, two algorithms
for constrained optimization we refer to as \emph{alternating
  primal-dual descent} (APDD) and \emph{interpolating primal-dual
  descent} (IPDD) capable of outperforming mirror descent and dual
averaging algorithms in some situations.

It should be mentioned that MD\ and DA\ algorithms were studied in a common framework in \cite{mcmahan2011follow,mcmahan2017survey}. Those works are, however, are very different in spirit---they deal with  unconstrained problems, the
difference between mirror descent and dual averaging appears as a
result of utilizing the regularizers/mirror maps which vary over time, and the
unification is then achieved by tweaking the way the time-varying
regularizers/mirror maps are defined.

\paragraph{Paper outline.}
In Section~\ref{sec:greedy-mirr-desc}
(resp.~\ref{sec:lazy-mirror-descent}) we recall the definitions of the
mirror descent (resp.\ dual averaging) algorithms.  Then, we introduce in
Section~\ref{sec:unif-mirr-desc} a new family of
algorithms called UMD and
show that MD and DA are special cases. We then establish some basic
properties of UMD, to be used to
derive complexity estimates for UMD-type algorithms
in various contexts. Next, in
Section~\ref{sec:guar-conv-nonc}, we study applications of UMD to smooth and nonsmooth convex
optimization. 
In Section~\ref{sec:gold:-new-algorithm} we introduce two
new algorithms---APDD (alternating primal-dual descent) and IPDD (interpolating primal-dual descent)---and present results of some preliminary numerical experiments which
show that they compare favorably to MD and DA.

Proofs which are longer than few lines are postponed to Appendix \ref{sec:postponed-proofs}.
\subsection{Preliminaries and notation}
\label{sec:preliminairies}
For $x,\vt \in \rset^n$, $\ps{\vt }{x}$ stands for the canonical scalar product.
For a given set $\mathcal{A}\subset \rset^n$,
$\operatorname{int}\mathcal{A}$ and $\operatorname{cl}\mathcal{A}$
denote its interior and closure respectively.
For a given norm $\left\|\,\cdot\,\right\|_{}$ on $\rset^n$, we
denote $\left\|\,\cdot\,\right\|_{*}$ the conjugate norm,
\[
\left\| \vt  \right\|_{*}=\max_{\left\| x \right\| \leqslant 1} \ps{\vt}{x} ,\qquad \vartheta\in \mathbb{R}^n.
\]
The characteristic function $I_{\mathcal{C}}:\rset^n\to \rset\cup\{+\infty\}$ of
a convex set $\mathcal{C}\subset \rset^n$ is zero on $\mathcal{C}$ and
equal to $+\infty$ elsewhere.
Let $g:\rset^n\to \rset\cup\{+\infty\}$ be a convex function. Its domain
$\operatorname{dom}g$ is the set $\set{x \in \rset^n}{g(x) < \infty}$. Its
subdifferential $\partial g(x)$ at  $x\in \rset^n$ is the set of $\vt \in \rset^n$ such that
  \[ \forall x'\in \rset^n,\quad g(x')-g(x)\geqslant \ps{\vt}{x'-x}\eqsp, \]
  we refer to $\vt\in \partial g(x)$ as subgradients of $g$ at $x$.
  The Fenchel--Legendre transform of $g$ is defined by
  \[ g^*(\vt)=\max_{x\in \rset^n}\left\{ \ps{\vt}{x} -g(x) \right\},\qquad \vt \in \rset^n.  \]
If $g$ is differentiable at $x\in \rset^n$, its Bregman
  divergence between $x,x'\in \rset^n$ is defined as
\[
D_g(x',x)=g(x')-g(x)-\ps{\nabla g(x)}{x'-x} \eqsp.
\]
Some convexity definitions and results are recalled in Section~\ref{sec:conv-analys-tools}.

Throughout the paper, we consider
algorithms associated with an arbitrary sequence
$(\xi_t)_{t\geqslant 1}$ in $\rset^n$, and the problem domain $\X\subset \rset^n$ is a closed and nonempty convex set.

\section{Mirror descent and dual averaging algorithms}
\label{sec:MD-DA}
\subsection{Mirror descent}
\label{sec:greedy-mirr-desc}
The mirror descent algorithms rely on the notion of
\emph{mirror maps} that we now recall. Our presentation draws
inspiration from \cite{bubeck2015convex}, with a few differences in definitions and
conventions.
\begin{definition}
  \label{def:mirror-maps}
  Let $F:\rset^n\to \rset\cup\{+\infty\}$. Denote $\mathcal{D}_F:=\operatorname{int}\operatorname{dom}F$. We say that
$F$ is an $\X$-compatible \emph{mirror map} if
\begin{enumerate}[(i)]
\item\label{item:F-lsc-strict-convex} $F$ is lower-semicontinuous and strictly convex,
\item\label{item:F-diff} $F$ is differentiable on $\mathcal{D}_F$,
\item\label{item:gradient-all-values} the gradient of $F$ takes all possible values, i.e.\ $\nabla F(\mathcal{D}_F)=\rset^n$.
\item\label{item:X-subset-bar-D} $\X\subset \operatorname{cl}\mathcal{D}_F$,
\item\label{item:X-inter-D-nonempty} $\X\cap \mathcal{D}_F\neq \emptyset$.
\end{enumerate}
\end{definition}
The following proposition gathers a few properties of mirror maps.
For the sake of completeness, its proof is given in Appendix~\ref{sec:postponed-proofs}.

\begin{proposition}
  \label{prop:mirror-map-properties}
Let $F:\rset^n\to \rset\cup\{+\infty\}$ be an
$\X$-compatible mirror map, $F^*$ the Fenchel--Legendre transform of $F$, and $\mathcal{D}_F:=\operatorname{int}\operatorname{dom}F$. Then,
\begin{enumerate}[(i)]
\item\label{item:F-star-dom} $\operatorname{dom}F^*=\rset^n$,
\item\label{item:F-star-diff} $F^*$ is differentiable on $\rset^n$,
\item\label{item:F-star-image} $\nabla F^*(\rset^n)=\mathcal{D}_F$,
\item\label{item:gradient-F-star-inverse} For all $x\in \mathcal{D}_F$ and $y\in \rset^n$, $\nabla F^*(\nabla F(x))=x$ and $\nabla F(\nabla F^*(y))=y$.
\end{enumerate}
\end{proposition}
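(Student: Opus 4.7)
The plan is to use the surjectivity of $\nabla F$ on $\mathcal{D}_F$ together with strict convexity of $F$ to construct, for each $\vt \in \rset^n$, a unique preimage $x_\vt \in \mathcal{D}_F$ with $\nabla F(x_\vt) = \vt$, and then to identify this point with $\nabla F^*(\vt)$. All four items will fall out of this single construction combined with the standard Legendre--Fenchel correspondence.

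For \emph{(i)}, I would fix $\vt \in \rset^n$ and use condition~(iii) of Definition~\ref{def:mirror-maps} to pick $x_\vt \in \mathcal{D}_F$ with $\nabla F(x_\vt) = \vt$. Since $F$ is convex and differentiable at the interior point $x_\vt$, the vanishing condition $\nabla F(x_\vt) - \vt = 0$ means $x_\vt$ globally maximizes $x \mapsto \ps{\vt}{x} - F(x)$, so $F^*(\vt) = \ps{\vt}{x_\vt} - F(x_\vt) < +\infty$.

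The heart of the proof is a short lemma that strict convexity of $F$ makes the subdifferential correspondence injective: $\partial F(x_1) \cap \partial F(x_2) \neq \varnothing$ forces $x_1 = x_2$. This follows by summing the two subgradient inequalities evaluated at the midpoint $\tfrac{1}{2}(x_1 + x_2)$ to conclude $F(\tfrac{1}{2}(x_1+x_2)) \geq \tfrac{1}{2}(F(x_1) + F(x_2))$, contradicting strict convexity unless $x_1 = x_2$. Combining this with the Fenchel--Moreau identity $F = F^{**}$ (applicable since $F$ is proper, convex, and lsc under the assumptions of Definition~\ref{def:mirror-maps}) and the standard equivalence $x \in \partial F^*(\vt) \iff \vt \in \partial F(x)$, one obtains $\partial F^*(\vt) = \{x_\vt\}$. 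Because $\operatorname{dom} F^* = \rset^n$ by \emph{(i)}, the convex function $F^*$ is continuous on $\rset^n$, and a single-valued subdifferential on an open set upgrades to differentiability, so $F^*$ is differentiable at $\vt$ with $\nabla F^*(\vt) = x_\vt \in \mathcal{D}_F$. This yields \emph{(ii)} and the inclusion $\nabla F^*(\rset^n) \subset \mathcal{D}_F$; the reverse inclusion needed for \emph{(iii)} is the one-line observation that, for $x \in \mathcal{D}_F$ and $\vt := \nabla F(x)$, the point $x$ itself plays the role of $x_\vt$, hence $x = \nabla F^*(\vt)$.

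Claim \emph{(iv)} is then a rephrasing of the construction: $\nabla F^*(\nabla F(x)) = x$ for $x \in \mathcal{D}_F$ is exactly how $x_\vt$ was produced with $\vt = \nabla F(x)$, and $\nabla F(\nabla F^*(y)) = \nabla F(x_y) = y$ for $y \in \rset^n$ is immediate from $x_y \in \mathcal{D}_F$. The main subtlety to watch for is ensuring that no boundary point of $\operatorname{dom} F$ can slip into $\partial F^*(\vt)$ alongside $x_\vt$; this is precisely what the strict-convexity injectivity lemma rules out, so strict convexity---rather than mere convexity---is the essential ingredient of the argument.
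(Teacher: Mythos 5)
Your proposal is correct and follows essentially the same route as the paper: item (i) via surjectivity of $\nabla F$ on $\mathcal{D}_F$, item (ii) by showing $\partial F^*(\vt)$ is the singleton $\{x_\vt\}$ using strict convexity together with the subdifferential duality $x\in\partial F^*(\vt)\iff\vt\in\partial F(x)$ and then invoking Proposition~\ref{prop:subdifferential-singleton}, and items (iii)--(iv) by the reverse substitution $\vt=\nabla F(x)$. Your explicit midpoint ``injectivity of $\partial F$'' lemma is just an unpacked form of the paper's appeal to the uniqueness of the maximizer of the strictly concave map $x\mapsto\ps{\vt}{x}-F(x)$, so the two arguments coincide in substance.
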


We can now define the mirror descent algorithm
\cite{nemirovski1979efficient}.
\begin{definition}
Let $F$ be an $\X$-compatible mirror map, $x_1\in
\X\cap \mathcal{D}_F$, and
$\xi\eqdef(\xi_t)_{t\geqslant 1}$ be a sequence in $\rset^n$. We define the
associated MD\  iterates according to
\begin{equation}
\label{eq:gmd}
\tag{MD}
x_{t+1}=\argmin_{x\in \X}D_{F}(x,\ \nabla F^*(\nabla F(x_t)+\xi_t)),\qquad t\geqslant 1.
\end{equation}
$(x_t)_{t\geqslant 1}$ is then said to be an MD$(\X,F,\xi)$ sequence
and $\xi:=(\xi_t)_{t\geqslant 1}$ is called the sequence of \emph{dual increments}.
\end{definition}
The above is well-defined thanks to the following recursive argument.  As soon
as $x_t$ ($t\geqslant 1$) belongs to $\X\cap \mathcal{D}_F$,
$\nabla F(x_t)$ exists because $F$ is differentiable on
$\mathcal{D}_F$ by Definition~\ref{def:mirror-maps}. Then,
$\nabla F^*(\nabla F(x_t)+\xi_t)$ exists because $F^*$ is
differentiable on $\rset^n$ by
Proposition~\ref{prop:mirror-map-properties}--(\ref{item:F-star-diff}).
Then, the next iterate $x_{t+1}$ is obtained using the Bregman
projection onto $\X$, which is well-defined and belongs to
$\X\cap \mathcal{D}_F$ thanks to Theorem~\ref{thm:bregman-projection}
below.
\begin{theorem}[Bregman projection onto $\X$]\footnote{Similar statements can be found in the
literature (cf.\ e.g., \cite[Lemma A.1]{cox2014dual}) but we could not
find one that exactly matches assumptions of Theorem \ref{thm:bregman-projection} on $F$ and $\X$. We provide a detailed proof in
Appendix~\ref{sec:postponed-proofs} for completeness.
}
  \label{thm:bregman-projection}
  Let $F:\rset^n\to \rset\cup\{+\infty\}$ be an
  $\X$-compatible mirror map, and let
  $\mathcal{D}_F:=\operatorname{int}\operatorname{dom}F$. Then for any point
  $x_0\in \mathcal{D}_F$, the minimizer of $D_F(x,x_0)$ over $x\in \X$ exists, is unique, and  belongs to $\X\cap \mathcal{D}_F$.
  In other words,
  \[ \argmin_{x\in \X}D_F(x,x_0)=\argmin_{x\in \X\cap \mathcal{D}_F}D_F(x,x_0). \]
\end{theorem}

The above \eqref{eq:gmd} iteration can be rewritten as
follows. Denote $\tilde{x}_t:=\nabla F^*(\nabla F(x_t)+\xi_t)$.  Using
Proposition~\ref{prop:mirror-map-properties}--(\ref{item:gradient-F-star-inverse}),
we get $\nabla F(\tilde{x}_t)=\nabla F(x_t)+\xi_t$. Then, using the
definition of the Bregman divergence:
\begin{align*}
x_{t+1}&=\argmin_{x\in \X}\left\{ F(x)-F(\tilde{x}_t)-\left<
  \nabla F(\tilde{x}_t) \middle| x-\tilde{x}_t \right>  \right\}\\ &=\argmin_{x\in \X}\left\{ F(x)-\left< \nabla F(x_t)+\xi_t
                                                                     \middle| x \right>  \right\}\\
&=\argmin_{x\in \X}\left\{ F(x)-F(x_t)-\left< \nabla F(x_t)
                                                                                                      \middle| x-x_t \right> -\left< \xi_t \middle| x \right>  \right\}\\
&=\argmin_{x\in \X}\left\{ -\left< \xi_t \middle| x \right> +D_F(x,x_t) \right\}.
\end{align*}
The last expression is called \emph{primal formulation}, and is usually
taken as the definition of mirror descent, cf. \cite[Section~3]{beck2003mirror}. Introducing the
\emph{prox-mapping}:
\[ T_{\X,F}(u,x):=\argmin_{x'\in \X}\left\{ -\left< u \middle| x' \right> +D_F(x',x) \right\},\quad u\in \rset^n,\ x\in \X\cap \mathcal{D}_F, \]
the MD\ iterates starting from some $x_1\in \X\cap \mathcal{D}_F$ can then be alternatively written as:
\begin{equation}
\label{eq:md-prox}
\tag{MD-prox}
x_{t+1}=T_{\X,F}(\xi_t,x_t),\quad t\geqslant 1.
\end{equation}

Some examples of widely used mirror maps are as follows.
\begin{example}[Gradient descent]
  The simplest example is provided by
$\X=\mathbb{R}^n$ and $F(x)=\frac{1}{2}\left\| x
\right\|_2^2$. One can easily see that $F$ is indeed an
$\mathbb{R}^n$-compatible mirror map. In this case, $\nabla F(x)=\nabla F^*(x)=x$ are identity mappings, and the update rule \eqref{eq:gmd} boils down to the gradient
descent iteration if we consider dual increments $\xi_t:=-\gamma\nabla
f(x_t)$ where $f:\mathbb{R}^n\to \mathbb{R}$ is a
differentiable objective function.
\end{example}
\begin{example}[Projected gradient descent]
A common variant of the previous example is the case where $\X$ is some
closed proper subset of $\mathbb{R}^n$. One can easily see that
$F(x)=\frac{1}{2}\left\| x \right\|_2^2$ is an $\X$-compatible
mirror map. If $f$ is an objective function which is
differentiable on $\X$, then \eqref{eq:gmd} with $\xi_t=-\gamma\nabla f(x_t)$
corresponds to the standard projected gradient descent algorithm.
\end{example}
\begin{example}[Exponential weights]
  A special case corresponds to $\X$ being the $n$-simplex:
  \[ \X=\left\{ x\in \mathbb{R}^n_+\, \colon   \sum_{i=1}^nx_i=1 \right\}  \]
  and $F$ given by $F(x)=\sum_{i=1}^nx_i\log x_i$ for $x\in \mathbb{R}_+^n$
  (using convention $0\log 0=0$) and $F(x)=+\infty$ for $x\not \in
  \mathbb{R}_+^n$. Then $F$ is  an $\X$-compatible mirror map.
\end{example}

\begin{figure}
  \centering
\begin{tikzpicture}[y=0.80pt, x=0.80pt, yscale=-1.000000, xscale=1.000000, inner sep=0pt, outer sep=0pt,scale=2]
  \path[draw=black,fill=ccccccc,line join=miter,line cap=butt,even odd rule,line
    width=0.800pt] (-24.9259,-39.9663) .. controls (-10.0099,-39.7633) and
    (5.8309,-23.3238) .. (0.0000,0.0000) .. controls (-4.0000,16.0000) and
    (-15.5225,33.4460) .. (-29.1922,30.4615);
\draw (0,0) node {$\bullet$};
\draw[->,>=latex] (-15,-15) node {$\bullet$} node[above=5pt]{$x_t$}
to[bend right=20] node[midway,above=5pt]{$\nabla F$} (115,-15) node
{$\bullet$} node[right=5pt] {$\nabla F(x_t)$};
\draw[->,>=latex] (115,-15) to (100,10) node
{$\bullet$} node[below right=5pt] {$\nabla F(x_t)+\xi_t$};
\draw[->,>=latex] (100,10) to[bend right=20] node[midway,below=5pt]
{$\nabla F^*$} (20,20) node {$\bullet$};
\draw[->,>=latex] (20,20) to[bend left=20] node[midway,above right]{$\displaystyle \argmin_{x\in \X}D_F(x,\,\cdot\,)$} (0,0) node{$\bullet$} node[left=5pt] {$x_{t+1}$};
\draw[dashed] (75,-50) -- (75,30);
\draw (40,-45) node {primal space};
\draw (110,-45) node {dual space};
\draw (-20,20) node {$\X$};
\end{tikzpicture}
  \caption{Mirror descent}
  \label{fig:greedy-mirror-descent}
\end{figure}

\subsection{Dual averaging}
\label{sec:lazy-mirror-descent}
The dual averaging algorithms rely on the notion of regularizers
which we now recall. These are  less restrictive than mirror maps:
we see below in Proposition~\ref{prop:F+I_X} that for a given mirror map, there always
exists a corresponding regularizer but the converse is not
true.
\begin{definition}[Regularizers]
A function $h:\rset^n\to \rset\cup\{+\infty\}$ is an $\X$-\emph{pre-regularizer} if it is strictly convex, lower-semicontinuous, and if $\operatorname{cl}\operatorname{dom}h=\X$.
Moreover, if $\operatorname{dom}h^*=\rset^n$,
then $h$ is said to be an $\X$-\emph{regularizer}.
\end{definition}
The following proposition gives several sufficient conditions for
the condition $\operatorname{dom}h^*=\rset^n$ to be
satisfied.
\begin{proposition}
  \label{prop:sufficient-conditions-dom-h-star-R}
Let $h$ be an $\X$-pre-regularizer.
\begin{enumerate}[(i)]
\item If $\X$ is compact, then $h$ is an $\X$-regularizer.
\item If $h$ is differentiable on
  $\mathcal{D}_h:=\operatorname{int}\operatorname{dom}h$ and $\nabla
  h(\mathcal{D}_h)=\rset^n$, then $h$ is an $\X$-regularizer.
\item If $h$ is strongly convex with respect to some norm
  $\left\|\,\cdot\,\right\|$, then $h$ is an $\X$-regularizer.
\end{enumerate}
\end{proposition}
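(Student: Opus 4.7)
My plan is to treat the three items separately, showing in each case that $h^{*}(\vt)<+\infty$ for every $\vt\in\rset^n$, which is exactly the content of $\operatorname{dom}h^{*}=\rset^n$.

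For (i), the argument is compactness-based. Since $h$ is lower-semicontinuous and $\operatorname{cl}\operatorname{dom}h=\X$ is compact, Weierstrass' theorem gives $\min_{x\in\X}h(x)>-\infty$, so $h$ is bounded below on $\operatorname{dom}h\subset\X$. Combined with the trivial bound $\ps{\vt}{x}\leq\|\vt\|_{*}\max_{y\in\X}\|y\|$ for $x\in\X$, this produces a finite upper bound on $\ps{\vt}{x}-h(x)$ uniformly in $x\in\operatorname{dom}h$, hence $h^{*}(\vt)<+\infty$.

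For (ii), I would exhibit a maximizer explicitly. Given $\vt\in\rset^n$, surjectivity of $\nabla h$ on $\mathcal{D}_h$ yields some $x\in\mathcal{D}_h$ with $\nabla h(x)=\vt$. Since $h$ is convex and differentiable at the interior point $x$, the gradient is a subgradient there, so the global inequality $h(y)\geq h(x)+\ps{\vt}{y-x}$ holds for every $y\in\rset^n$. Rearranging shows that $x$ attains the supremum defining $h^{*}(\vt)$, giving $h^{*}(\vt)=\ps{\vt}{x}-h(x)<+\infty$.

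For (iii), the idea is to convert strong convexity into a quadratic growth bound. Let $\alpha>0$ denote the strong convexity modulus of $h$ with respect to $\|\cdot\|$. Then $g:=h-\tfrac{\alpha}{2}\|\cdot\|^{2}$ is proper convex and lower-semicontinuous (the square of a norm being convex). Since every proper convex lsc function admits an affine minorant (for instance via a subgradient at any point of $\operatorname{ri}(\operatorname{dom}g)$), there exist $a\in\rset^n$ and $b\in\rset$ with $h(x)\geq\tfrac{\alpha}{2}\|x\|^{2}+\ps{a}{x}+b$ for all $x\in\rset^n$. For any $\vt\in\rset^n$ this gives
\[
\ps{\vt}{x}-h(x)\leq\ps{\vt-a}{x}-\tfrac{\alpha}{2}\|x\|^{2}-b\leq\frac{\|\vt-a\|_{*}^{2}}{2\alpha}-b,
\]
using the elementary inequality $ct-\tfrac{\alpha}{2}t^{2}\leq c^{2}/(2\alpha)$ with $c=\|\vt-a\|_{*}$ and $t=\|x\|$. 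Hence $h^{*}(\vt)<+\infty$.

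The one (mild) obstacle is the global subgradient inequality invoked in (ii), which relies on the standard but not entirely trivial fact that a proper convex function which is differentiable at an interior point of its domain has its gradient as a subgradient there. Once this is granted, the proof reduces in all three cases to elementary bounds.
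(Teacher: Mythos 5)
Items (i) and (ii) are correct and essentially coincide with the paper's own proof: for (i) the paper simply observes that $x\mapsto\ps{\vt}{x}-h(x)$ is upper-semicontinuous, vanishes to $-\infty$ off $\X$, and hence attains its maximum on the compact set $\X$ (your two-sided bound is an equivalent packaging), and for (ii) it uses exactly your argument, citing Proposition~\ref{prop:subdifferential-inverse} for the fact that $\nabla h(x)=\vt$ makes $x$ a maximizer.

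Item (iii), however, contains a genuine error. You assert that $g:=h-\tfrac{\alpha}{2}\left\|\,\cdot\,\right\|^{2}$ is convex. With the definition of strong convexity used here (Definition~\ref{def:strong-convexity}, for an \emph{arbitrary} norm), this implication is false: the equivalence between ``$\alpha$-strongly convex w.r.t.\ $\left\|\,\cdot\,\right\|$'' and ``$h-\tfrac{\alpha}{2}\left\|\,\cdot\,\right\|^{2}$ convex'' relies on the identity $\tfrac12\|\lambda x+(1-\lambda)x'\|^2=\lambda\tfrac12\|x\|^2+(1-\lambda)\tfrac12\|x'\|^2-\tfrac{\lambda(1-\lambda)}{2}\|x-x'\|^2$, which holds only for norms induced by an inner product. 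Concretely, $h=\tfrac12\left\|\,\cdot\,\right\|_2^2$ on $\rset^2$ is $1$-strongly convex with respect to $\left\|\,\cdot\,\right\|_\infty$ (since $\left\|\,\cdot\,\right\|_\infty\leqslant\left\|\,\cdot\,\right\|_2$), yet $h-\tfrac12\left\|\,\cdot\,\right\|_\infty^2=\tfrac12\min(x_1^2,x_2^2)$ is not convex (compare its values at $(1,0)$, $(0,1)$ and their midpoint). So the affine minorant of $g$ on which your computation rests need not exist. The repair is easy and preserves your strategy: pick any $x_0$ with $v\in\partial h(x_0)\neq\varnothing$ and use the standard consequence of strong convexity $h(x)\geqslant h(x_0)+\ps{v}{x-x_0}+\tfrac{\alpha}{2}\left\|x-x_0\right\|^2$ (this is Proposition~\ref{prop:g-bregman-properties}--(\ref{item:g-bregman-strong-convexity}), via \cite[Lemma~13]{shalev2007online}); your elementary bound $ct-\tfrac{\alpha}{2}t^2\leqslant c^2/(2\alpha)$ then yields $h^*(\vt)\leqslant\ps{\vt}{x_0}-h(x_0)+\left\|\vt-v\right\|_*^2/(2\alpha)<+\infty$. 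The paper's own argument is shorter still: $x\mapsto\ps{\vt}{x}-h(x)$ is proper, upper-semicontinuous and strongly concave, hence attains its maximum.
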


\begin{proposition}[Differentiability of $h^*$]
  \label{prop:differentiability-h-star}
  Let $h$ be an $\X$-regularizer. Then $h^*$ is differentiable
  on $\rset^n$.
\end{proposition}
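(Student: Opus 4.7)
The plan is to show that for every $\vt\in\rset^n$ the subdifferential $\partial h^*(\vt)$ is nonempty and contains at most one point; by the standard convex-analysis fact that a convex function is differentiable at a point of its domain if and only if its subdifferential there is a singleton, this will prove differentiability of $h^*$ throughout $\rset^n$.

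First I would establish nonemptiness of $\partial h^*(\vt)$. Since $h$ is proper, convex and lower-semicontinuous, so is its Legendre--Fenchel conjugate $h^*$. The regularizer assumption $\operatorname{dom}h^*=\rset^n$ then means $h^*$ is a finite convex function on $\rset^n$, hence continuous, and in particular subdifferentiable at every point, since every $\vt\in\rset^n$ belongs to the relative interior of $\operatorname{dom}h^*$.

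Next I would exploit the Fenchel--Young characterization of the subdifferential of a conjugate: for a proper lsc convex $h$, one has $x\in\partial h^*(\vt)$ if and only if $h(x)+h^*(\vt)=\ps{\vt}{x}$, which is equivalent to $x$ being a maximizer of $x'\mapsto \ps{\vt}{x'}-h(x')$ on $\rset^n$. I then use the strict convexity of $h$ to rule out two distinct maximizers: if $x_1\neq x_2$ were both maximizers, both would lie in $\operatorname{dom}h$, and strict convexity would give
\[ h\!\left(\tfrac{x_1+x_2}{2}\right)<\tfrac{1}{2}h(x_1)+\tfrac{1}{2}h(x_2), \]
so that
\[ \ps{\vt}{\tfrac{x_1+x_2}{2}}-h\!\left(\tfrac{x_1+x_2}{2}\right)>\tfrac{1}{2}\bigl(\ps{\vt}{x_1}-h(x_1)\bigr)+\tfrac{1}{2}\bigl(\ps{\vt}{x_2}-h(x_2)\bigr)=h^*(\vt), \]
contradicting the definition of $h^*(\vt)$ as a supremum. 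Combined with nonemptiness, this forces $\partial h^*(\vt)$ to be exactly a singleton, and the differentiability criterion finishes the proof.

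The main obstacle is really just invoking the right two convex-analysis ingredients cleanly: (a) a convex function that is finite on all of $\rset^n$ is subdifferentiable everywhere, and (b) singleton subdifferential is equivalent to differentiability at interior points. Both are standard and presumably already recalled in the \emph{convex analysis tools} section referenced earlier; once they are in hand, the argument reduces to the strict-convexity computation above.
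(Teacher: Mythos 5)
Your proof is correct and follows essentially the same route as the paper: nonemptiness of $\partial h^*(\vt)$ from $\operatorname{dom}h^*=\rset^n$, the Fenchel--Young identification of $\partial h^*(\vt)$ with the maximizers of $x\mapsto\ps{\vt}{x}-h(x)$ (Proposition~\ref{prop:subdifferential-inverse}), uniqueness of the maximizer by strict convexity of $h$, and then Proposition~\ref{prop:subdifferential-singleton} to conclude differentiability. The only difference is that you write out the strict-convexity midpoint computation explicitly, which the paper leaves implicit.
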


\begin{proposition}
  \label{prop:F+I_X}
  Let $F$ be an $\X$-compatible mirror map.
  Then, $h:=F+I_{\X}$ is an $\X$-regularizer, and, moreover,
  $\nabla F(x)\in \partial h(x)$ for all $x\in \mathcal{D}_F$.
\end{proposition}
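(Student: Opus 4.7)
The plan is to verify the three defining properties of a $\X$-regularizer for $h = F + I_{\X}$, and then check the subgradient claim directly from the definition.

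First, I would check that $h$ is a $\X$-pre-regularizer. Strict convexity of $h$ is immediate from strict convexity of $F$ combined with convexity of $I_{\X}$. Lower semicontinuity follows from lower semicontinuity of $F$ together with the fact that $I_{\X}$ is lsc because $\X$ is closed. The only nontrivial point is $\operatorname{cl}\operatorname{dom} h = \X$. Since $\operatorname{dom} h = \X \cap \operatorname{dom} F \subset \X$ and $\X$ is closed, one inclusion is clear. For the reverse inclusion, fix any $x \in \X$ and pick some $x_0 \in \X \cap \mathcal{D}_F$, which exists by Definition~\ref{def:mirror-maps}--(\ref{item:X-inter-D-nonempty}). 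For $\lambda \in (0,1]$ set $x_\lambda = (1-\lambda) x + \lambda x_0$. Convexity of $\X$ gives $x_\lambda \in \X$, and the standard line-segment principle for convex sets (applied to the convex set $\operatorname{dom} F$, using $x \in \operatorname{cl}\mathcal{D}_F = \operatorname{cl}\operatorname{dom} F$ via Definition~\ref{def:mirror-maps}--(\ref{item:X-subset-bar-D}) and $x_0 \in \mathcal{D}_F$) gives $x_\lambda \in \mathcal{D}_F \subset \operatorname{dom} F$. Hence $x_\lambda \in \operatorname{dom} h$ and $x_\lambda \to x$ as $\lambda \to 0$, so $x \in \operatorname{cl}\operatorname{dom} h$.

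Next, I would check that $\operatorname{dom} h^* = \rset^n$. For any $\vt \in \rset^n$, since $h(x) \geqslant F(x)$ for all $x$, one has
\[
h^*(\vt) = \sup_{x \in \rset^n}\bigl( \ps{\vt}{x} - h(x) \bigr) \leqslant \sup_{x \in \rset^n}\bigl( \ps{\vt}{x} - F(x) \bigr) = F^*(\vt),
\]
and the right-hand side is finite by Proposition~\ref{prop:mirror-map-properties}--(\ref{item:F-star-dom}). Thus $h^*(\vt) < +\infty$ for every $\vt$, confirming that $h$ is a $\X$-regularizer.

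Finally, for the subgradient claim, I would take $x \in \X \cap \mathcal{D}_F$ (which is the only set on which both sides of the claimed inclusion are meaningfully defined, since $h(x) = +\infty$ otherwise) and verify $\nabla F(x) \in \partial h(x)$ directly. For any $x' \in \rset^n$, if $x' \notin \X$ or $x' \notin \operatorname{dom} F$ then $h(x') = +\infty$ and the subgradient inequality is trivial. Otherwise $x' \in \X \cap \operatorname{dom} F$, so $h(x') - h(x) = F(x') - F(x) \geqslant \ps{\nabla F(x)}{x' - x}$ by convexity of $F$ combined with its differentiability at $x$.

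I do not expect any serious obstacle; the only subtle step is the density argument $\operatorname{cl}\operatorname{dom} h = \X$, which requires combining the two mirror-map conditions $\X \subset \operatorname{cl}\mathcal{D}_F$ and $\X \cap \mathcal{D}_F \neq \varnothing$ via the line-segment principle for convex sets.
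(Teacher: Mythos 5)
Your proof is correct and follows essentially the same route as the paper: strict convexity and lower semicontinuity by summation, the density $\operatorname{cl}\operatorname{dom}h=\X$ via a segment from $x\in\X$ toward a point of $\X\cap\mathcal{D}_F$ (the paper proves the line-segment principle by hand where you cite it), $\operatorname{dom}h^*=\rset^n$ from $h\geqslant F$ and $\operatorname{dom}F^*=\rset^n$, and the subgradient inclusion directly from convexity of $F$. Your explicit remark that the subgradient claim is only meaningful for $x\in\X\cap\mathcal{D}_F$ is a fair point of care that the paper leaves implicit.
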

Proofs of Propositions \ref{prop:sufficient-conditions-dom-h-star-R}--\ref{prop:F+I_X} are postponed to Appendix~\ref{sec:proofs-2}.
\begin{corollary}
\begin{enumerate}[(i)]
\item $h(x):=\frac{1}{2}\left\| x \right\|_2^2+I_{\X}(x)$ is an $\X$-regularizer.
\item The entropy defined as:
  \[ h(x):=
\begin{cases}
  \sum_{i=1}^nx_i\log x_i&\text{if $x\in \Delta_n$}\\
  +\infty&\text{otherwise},
\end{cases}
  \]
  where $\Delta_n:=\left\{ x\in \rset_+^n \,\colon  \sum_{i=1}^nx_i=1
  \right\}$ (where $0\log 0=0$ by convention) is a $\Delta_n$-regularizer.
\end{enumerate}
\end{corollary}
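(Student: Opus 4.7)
The plan is to derive both parts as immediate consequences of Proposition~\ref{prop:F+I_X}: in each case I exhibit a $\X$-compatible mirror map $F$ whose sum with the convex characteristic $I_{\X}$ coincides with the stated function $h$, and then invoke the proposition to conclude that $h$ is a regularizer.

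For part (i), I take $F(x) := \tfrac{1}{2}\|x\|_2^2$ with $\operatorname{dom} F = \rset^n$, so $\mathcal{D}_F = \rset^n$. The five conditions of Definition~\ref{def:mirror-maps} are essentially immediate: $F$ is continuous and strictly convex, smooth on $\rset^n$ with $\nabla F = \Id$, hence $\nabla F(\mathcal{D}_F) = \rset^n$; and both $\X \subset \operatorname{cl}\mathcal{D}_F = \rset^n$ and $\X \cap \mathcal{D}_F = \X \neq \varnothing$ hold by the standing assumption on $\X$. Proposition~\ref{prop:F+I_X} then yields directly that $h = F + I_{\X} = \tfrac{1}{2}\|\cdot\|_2^2 + I_{\X}$ is a $\X$-regularizer.

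For part (ii), I set $F(x) := \sum_{i=1}^n x_i \log x_i$ on $\rset_+^n$ (using the convention $0 \log 0 = 0$, which makes $t \mapsto t\log t$ continuous on $[0,\infty)$) and $F(x) := +\infty$ otherwise. Then $\operatorname{dom} F = \rset_+^n$ and $\mathcal{D}_F = \set{x \in \rset^n}{x_i > 0 \text{ for all } i}$. The verifications reduce to standard facts: $F$ is lower-semicontinuous (continuous on $\rset_+^n$, jumping to $+\infty$ outside), strictly convex as a separable sum of strictly convex $t \log t$, smooth on $\mathcal{D}_F$ with $\partial_i F(x) = \log x_i + 1$, and the surjectivity $\nabla F(\mathcal{D}_F) = \rset^n$ holds because any $\vt \in \rset^n$ is realized by $x_i = \exp(\vt_i - 1)$. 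Finally $\Delta_n \subset \rset_+^n = \operatorname{cl}\mathcal{D}_F$ and $(1/n,\ldots,1/n) \in \Delta_n \cap \mathcal{D}_F$, so $F$ is a $\Delta_n$-compatible mirror map. Proposition~\ref{prop:F+I_X} then gives that $F + I_{\Delta_n}$ is a $\Delta_n$-regularizer, and this function agrees pointwise with $h$: on $\Delta_n$ the indicator vanishes and we recover $\sum_i x_i \log x_i$, while outside $\Delta_n$ the indicator forces $h = +\infty$.

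There is no serious obstacle. The only points meriting care are the standard justification of lower-semicontinuity of the entropy at the boundary of $\rset_+^n$ (which is exactly what the convention $0 \log 0 = 0$ ensures) and the explicit inversion giving surjectivity of $\nabla F$ in part (ii); both are classical. Everything else is a direct application of the already-established Proposition~\ref{prop:F+I_X}.
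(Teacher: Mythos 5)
Your proof is correct and follows exactly the route the paper intends: the corollary is stated as an immediate consequence of Proposition~\ref{prop:F+I_X} applied to the Euclidean mirror map $F=\tfrac12\|\cdot\|_2^2$ and the entropic mirror map on $\rset_+^n$, both of which the paper has already exhibited as $\X$-compatible mirror maps in its examples. Your explicit verification of the mirror-map axioms (including the surjectivity of $\nabla F$ via $x_i=\exp(\vt_i-1)$ and the lower-semicontinuity at the boundary under the convention $0\log 0=0$, which is clearly what the paper means despite its typo) supplies precisely the details the paper leaves implicit.
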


\begin{example}[Elastic-net regularization]
  An example of a regularizer which does not have a mirror map
  counterpart, because it fails to be differentiable,
  is the so-called \emph{elastic-net} regularizer:
  \[ h(x):=\left\| x \right\|_1+\left\| x \right\|_2^2, \] which is
  indeed a $\rset^n$-regularizer due to the strong convexity
  (cf. Proposition~\ref{prop:sufficient-conditions-dom-h-star-R}).
\end{example}

We now recall the definition of the dual averaging (DA) iterates in
the case of the constant regularizer.\footnote{In its general form \cite{nesterov2009primal}, the DA
algorithm allows for a time-variable regularizer. For the sake of clarity, we consider here the simple case of time-invariant regularizers which already captures some essential
differences between MD and DA.}
\begin{definition}[Dual averaging \cite{nesterov2009primal,nesterov2007dual}]
Let $h$ be an $\X$-regularizer and let $\xi:=(\xi_t)_{t\geqslant 1}$
be a sequence in $\rset^n$.
A sequence $(x_t,\vt_t)_{t\geqslant 1}$ is said to be a sequence of
DA\ iterates associated with $h$ and $\xi$ (DA$(h,\xi)$ for short) if for $t\geqslant 1$,
\begin{align}
\label{eq:lmd}
\tag{DA}
  \begin{split}
    x_t&=\nabla h^*(\vt_t)\\
   \vt_{t+1}&=\vt_t+\xi_t.
  \end{split}
\end{align}
Points $(x_t)_{t\geqslant 1}$ (resp.\ $(\vt_t)_{t\geqslant 1}$) are
then called \emph{primal iterates} (resp.\ \emph{dual iterates}), and
vectors $(\xi_t)_{t\geqslant 1}$ are called \emph{dual increments}.
\end{definition}

We can see that for a given couple $(x_1,\vt_1)$ of initial points
satisfying $x_1=\nabla h^*(\vt_1)$, and a sequence $(\xi_t)_{t\geqslant
  1}$ of dual increments, the subsequent iterates $(x_t,\vt_t)_{t\geqslant 2}$ are well-defined and unique.

\begin{figure}
  \centering
\begin{tikzpicture}[y=0.80pt, x=0.80pt, yscale=-1.000000, xscale=1.000000, inner sep=0pt, outer sep=0pt,scale=2]
  \path[draw=black,fill=ccccccc,line join=miter,line cap=butt,even odd rule,line
    width=0.800pt] (-24.9259,-39.9663) .. controls (-10.0099,-39.7633) and
    (5.8309,-23.3238) .. (0.0000,0.0000) .. controls (-2.0000,8.0000) and
    (-4.9107,16.3738) .. (-10.0000,22.3622) .. controls (-15.0893,28.3506) and
    (-22.3573,31.9537) .. (-29.1922,30.4615);

\draw[->,>=latex] (115,-35) -- node[right=5pt,midway]{$\xi_{t-1}$} (115,-15);
\draw[->,>=latex] (115,-15) node {$\bullet$}
-- (110,10) node {$\bullet$} node[midway,right=5pt]{$\xi_t$};
\draw[->,>=latex] (115,-15) to[bend left=10] node[midway,above=5pt]
{$\nabla h^*$} (-5,-10) node {$\bullet$} node[left=5pt] {$x_t$};
\draw[->,>=latex] (110,10) to[bend left=10] node[midway,above=5pt]
{$\nabla h^*$}  (-2,8) node {$\bullet$} node[left=5pt] {$x_{t+1}$};

\draw[dashed] (75,-50) -- (75,30);
\draw (40,-45) node {primal space};
\draw (110,-45) node {dual space};
\draw (-20,20) node {$\X$};
\end{tikzpicture}

  \caption{Dual averaging}
  \label{fig:lazy-mirror-descent}
\end{figure}


\section{The unified mirror descent algorithm}
\label{sec:unif-mirr-desc}
In this section, we introduce a general family of algorithms which we
refer to as {\em unified mirror descent (UMD)} and show that MD\ and DA\ are 
special cases.

\subsection{Definition, properties and special cases}
\label{sec:defin-prop-spec}

\begin{definition}
\label{def:definition-UMD}
Let $h$ be an $\X$-regularizer and
$\xi:= (\xi_t)_{t\geqslant 1}$ be a sequence in $\rset^n$. We say that
$(x_t,\vt_t)_{t\geqslant 1}$ is sequence of UMD iterates associated with
$h$ and $\xi$ (or a UMD$(h,\xi)$ sequence for short) if
for all $t\geqslant 1$:
\begin{subequations}
\begin{align}
\label{eq:umd-gradient-h-star}
  &x_{t}=\nabla h^*(\vt_{t}),\\
  \label{eq:umd-variational}
&\forall x\in \X,\ \left< \vt_{t+1}-\vt_{t}-\xi_{t} \middle| x-x_{t+1} \right>\geqslant 0.
\end{align}
\end{subequations}
Points $(x_t)_{t\geqslant 1}$ (resp.\ $(\vt_t)_{t\geqslant 1}$) are
called \emph{primal iterates} (resp.\ \emph{dual iterates}), and
vectors $(\xi_t)_{t\geqslant 1}$ are called \emph{dual increments}.
\end{definition}
\begin{proposition}
  \label{prop:umd-properties}
  Let $(x_t,\vt_t)_{t\geqslant 1}$ be an UMD($h,\xi$) sequence
  defined as above. Then for all $t\geqslant 1$,
\begin{enumerate}[(i)]
\item\label{item:y-in-partial-h-x} $\vt_t\in \partial h(x_t)$,
\item
\label{item:y-u-in-partial-h-x} $\vt_t+\xi_t\in \partial
  h(x_{t+1})$ and $x_{t+1}=\nabla h^*(\vt_t+\xi_t)$.
\end{enumerate}
\end{proposition}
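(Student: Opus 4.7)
The plan is to derive both statements from standard Fenchel--Young duality together with the variational inequality \eqref{eq:umd-variational}, using that $h^{*}$ is differentiable on $\rset^{n}$ by Proposition~\ref{prop:differentiability-h-star}.

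For (\ref{item:y-in-partial-h-x}), I would start from the fact that for any convex lower-semicontinuous $h$ and any $\vt \in \rset^n$, the equivalence $x \in \partial h^{*}(\vt) \iff \vt \in \partial h(x)$ holds. Since $h^{*}$ is differentiable on $\rset^{n}$, $\partial h^{*}(\vt_{t}) = \{\nabla h^{*}(\vt_{t})\}$, so the defining equation \eqref{eq:umd-gradient-h-star} $x_{t}=\nabla h^{*}(\vt_{t})$ is exactly $x_{t}\in \partial h^{*}(\vt_{t})$, which gives $\vt_{t}\in \partial h(x_{t})$.

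For (\ref{item:y-u-in-partial-h-x}), applying (\ref{item:y-in-partial-h-x}) at index $t+1$ first gives $\vt_{t+1}\in \partial h(x_{t+1})$. In particular $x_{t+1}\in \operatorname{dom}h\subset \operatorname{cl}\operatorname{dom}h=\X$, so we may legitimately use $x_{t+1}$ as the base point in the subgradient inequality. The goal is then to prove $h(x')-h(x_{t+1})\geqslant \ps{\vt_{t}+\xi_{t}}{x'-x_{t+1}}$ for every $x'\in \rset^{n}$. If $x'\notin \X$, then $x'\notin \operatorname{dom}h$ (since $\operatorname{dom}h\subset \X$), hence $h(x')=+\infty$ and the inequality holds trivially. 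If $x'\in \X$, I would combine the subgradient inequality for $\vt_{t+1}$ with the variational inequality \eqref{eq:umd-variational}:
\[ h(x')-h(x_{t+1})\geqslant \ps{\vt_{t+1}}{x'-x_{t+1}} = \ps{\vt_{t}+\xi_{t}}{x'-x_{t+1}} + \ps{\vt_{t+1}-\vt_{t}-\xi_{t}}{x'-x_{t+1}} \geqslant \ps{\vt_{t}+\xi_{t}}{x'-x_{t+1}}, \]
which yields $\vt_{t}+\xi_{t}\in \partial h(x_{t+1})$. The identity $x_{t+1}=\nabla h^{*}(\vt_{t}+\xi_{t})$ then follows from the same Fenchel--Young equivalence as in (\ref{item:y-in-partial-h-x}): $\vt_{t}+\xi_{t}\in \partial h(x_{t+1})$ is equivalent to $x_{t+1}\in \partial h^{*}(\vt_{t}+\xi_{t})=\{\nabla h^{*}(\vt_{t}+\xi_{t})\}$.

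The only subtle point is checking that $x_{t+1}\in \X$ so that we can actually apply \eqref{eq:umd-variational} with $x=x'$ in the previous step; this is handled above by noting $x_{t+1}\in \operatorname{dom}h\subset \X$. Everything else is bookkeeping with the Fenchel--Young identity and the variational inequality.
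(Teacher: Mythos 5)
Your proof is correct and follows essentially the same route as the paper's: part (i) via the Fenchel--Young equivalence between $\partial h$ and $\partial h^*$ together with the differentiability of $h^*$, and part (ii) by combining the subgradient inequality for $\vt_{t+1}\in\partial h(x_{t+1})$ with the variational condition \eqref{eq:umd-variational} on $\X$ and the triviality of the inequality off $\operatorname{dom}h$. Your only extra step, explicitly deducing $x_{t+1}=\nabla h^*(\vt_t+\xi_t)$ from $\vt_t+\xi_t\in\partial h(x_{t+1})$, is a welcome completion of a detail the paper leaves implicit; the worry about needing $x_{t+1}\in\X$ to invoke \eqref{eq:umd-variational} is unnecessary, since that condition quantifies only over $x\in\X$ and holds by definition of the iterates.
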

\begin{proof}
  Let $t\geqslant 1$. By definition of UMD\ iterates,
  $x_t=\nabla h^*(\vt_t)$, which combined with
  Proposition~\ref{prop:subdifferential-inverse} of Appendix~\ref{sec:conv-analys-tools} implies (\ref{item:y-in-partial-h-x}).
Furthermore, for all $x\in \X$ we deduce from $\vt_{t+1}\in \partial
  h(x_{t+1})$ that
  \[ h(x)-h(x_{t+1})\geqslant \left< \vt_{t+1} \middle| x-x_{t+1} \right>\geqslant \left< \vt_t+\xi_t \middle| x-x_{t+1} \right>, \]
  where the second inequality results from \eqref{eq:umd-variational} in the  definition of UMD\ iterates. On the other hand,  $h(x)=+\infty$ for $x\notin \X$
implying the inequality $h(x)-h(x_{t+1})\geqslant \ps{\vt_t+\xi_t}{x-x_{t+1}}$ in this case. We conclude that  $\vt_t+\xi_t$ also belongs to $\partial h(x_{t+1})$, i.e.,
  (\ref{item:y-u-in-partial-h-x}) holds true.\qed
\end{proof}

\begin{figure}
  \centering

\begin{tikzpicture}[y=0.80pt, x=0.80pt, yscale=-1.000000, xscale=1.000000, inner sep=0pt, outer sep=0pt,scale=2]
  \path[draw=black,fill=ccccccc,line join=miter,line cap=butt,even odd rule,line
    width=0.800pt] (-24.9259,-39.9663) .. controls (-10.0099,-39.7633) and
    (5.8309,-23.3238) .. (0.0000,0.0000) .. controls (-4.0000,16.0000) and
    (-15.5225,33.4460) .. (-29.1922,30.4615);
\draw (0,0) node {$\bullet$};
\draw[->,>=latex] (-15,-15) node {$\bullet$} node[above=5pt]{$x_t$}
to[bend right=20] node[midway,below=5pt]{$\partial h$} (115,-30) node
{$\bullet$} node[right=5pt] {$\vt_t$};
\draw[->,>=latex] (115,-30) to (110,-5) node
{$\bullet$} node[right=5pt] {$\vt_t+\xi_t$};
\draw[->,>=latex] (110,-5) to [bend left=20]
node[below=5pt,midway]{$\nabla h^*$} (0,0) node{$\bullet$} node[left=5pt] {$x_{t+1}$};
\draw[->,>=latex] (0,0) to[bend left=20] node[midway,below=5pt]
{$\partial h$} (110,10) node{$\bullet$} node[right=5pt]{$\vt_{t+1}$};

\draw[dashed] (75,-50) -- (75,30);
\draw (40,-45) node {primal space};
\draw (110,-45) node {dual space};
\draw (-20,20) node {$\X$};
\end{tikzpicture}
  \caption{Unified mirror descent}
  \label{fig:unified-mirror-descent}
\end{figure}

\begin{remark}[Existence of UMD iterates]
As soon as $\X$-regularizer $h$ and sequence of dual
increments $(\xi_t)_{t\geqslant 1}$ are given, we can see that
UMD$(h,\xi)$ iterates always exist.  Indeed, from the definition of
regularizers, it follows that there exists a primal point
$x_1\in \X$ such that $\partial h(x_1)\neq \emptyset$; in other
words, there exists $(x_1,\vt_1)$ such that $x_1=\nabla h^*(\vt_1)$.
Then, for $t\geqslant 1$, one can consider $\vt_{t+1}:=\vt_t+\xi_t$
which indeed satisfies variational condition
\eqref{eq:umd-variational}, and then define
$x_{t+1}:=\nabla h^*(\vt _{t+1})$.  This choice of $\vt_{t+1}$ actually
corresponds to the iteration of the DA\ algorithm.
\end{remark}
\begin{proposition}[DA is a special case of UMD]
  \label{prop:da-special-case-umd}
  Let $h$ be an $\X$-regularizer,
  $\xi:= (\xi_t)_{t\geqslant 1}$ be a sequence in $\rset^n$. Let
  $(x_t,\vt_t)_{t\geqslant 1}$ be DA$(h,\xi)$ iterates.
  Then, $(x_t,\vt_t)_{t\geqslant 1}$ are UMD$(h,\xi)$ iterates.
\end{proposition}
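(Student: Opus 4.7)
The plan is to verify the two defining conditions of UMD directly from the DA recursion. This should be essentially immediate.

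First, I would observe that condition \eqref{eq:umd-gradient-h-star} in the definition of UMD iterates, namely $x_t = \nabla h^*(\vt_t)$, is literally the first line of the DA recursion, so it holds for all $t\geqslant 1$ by assumption.

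Next, I would check the variational condition \eqref{eq:umd-variational}. The DA update prescribes $\vt_{t+1} = \vt_t + \xi_t$, so the vector $\vt_{t+1} - \vt_t - \xi_t$ is identically zero. Therefore, for any $x \in \X$,
\[
\ps{\vt_{t+1} - \vt_t - \xi_t}{x - x_{t+1}} = \ps{0}{x - x_{t+1}} = 0 \geqslant 0,
\]
so condition \eqref{eq:umd-variational} is satisfied (as an equality) for every $x\in \X$.

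There is essentially no obstacle here: the DA recursion is a particular selection rule for $\vt_{t+1}$ within the larger freedom allowed by UMD. One could briefly remark that this matches the observation made just before the proposition, where it is noted that the choice $\vt_{t+1} := \vt_t + \xi_t$ is the one used to establish existence of UMD iterates, and corresponds precisely to the DA algorithm. Thus any $\DA(h,\xi)$ sequence is automatically a $\UMD(h,\xi)$ sequence.
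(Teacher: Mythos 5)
Your proof is correct and is essentially identical to the paper's: condition \eqref{eq:umd-gradient-h-star} is read off from the first line of the \eqref{eq:lmd} recursion, and condition \eqref{eq:umd-variational} holds trivially because $\vt_{t+1}-\vt_t-\xi_t=0$. No gaps.
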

\begin{proof}
  First, condition \eqref{eq:umd-gradient-h-star} is true by definition of \eqref{eq:lmd}.  Besides, the relation $\vt_{t+1}=\vt_t+\xi_t$ makes condition
\eqref{eq:umd-variational} trivially satisfied because one of the
arguments of the scalar product is zero.\qed
\end{proof}

\begin{proposition}[MD is a special case of UMD]
  \label{prop:md-special-case-umd}
Let $F$ be an $\X$-compatible mirror map and
$\xi:= (\xi_t)_{t\geqslant 1}$ be a sequence in $\rset^n$.
Let $(x_t)_{t\geqslant 1}$ be a sequence of MD$(F,\X,\xi)$ iterates.
Then, $(x_t,\nabla F(x_t))_{t\geqslant 1}$ is a sequence of UMD($F + I_{\X},\xi$) iterates.
\end{proposition}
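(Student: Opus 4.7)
The plan is to set $\vt_t := \nabla F(x_t)$ (which is well-defined since by Theorem~\ref{thm:bregman-projection} each GMD iterate belongs to $\X\cap \mathcal{D}_F$, where $F$ is differentiable) and verify directly that the pair $(x_t,\vt_t)_{t\geqslant 1}$ satisfies the two defining conditions \eqref{eq:umd-gradient-h-star} and \eqref{eq:umd-variational} of the UMD algorithm with regularizer $h:=F+I_{\X}$. Note that $h$ is indeed an $\X$-regularizer by Proposition~\ref{prop:F+I_X}, and that the same proposition already tells us $\nabla F(x_t)\in \partial h(x_t)$ for every $t\geqslant 1$.

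For condition \eqref{eq:umd-gradient-h-star}, I would argue as follows. From $\vt_t\in \partial h(x_t)$ and the standard duality between a convex function and its Legendre--Fenchel transform (i.e.\ $\zeta\in \partial h(x)$ iff $x\in \partial h^*(\zeta)$), we obtain $x_t\in \partial h^*(\vt_t)$. Since $h$ is an $\X$-regularizer, Proposition~\ref{prop:differentiability-h-star} gives that $h^*$ is differentiable on $\rset^n$, so $\partial h^*(\vt_t)=\{\nabla h^*(\vt_t)\}$, and therefore $x_t=\nabla h^*(\vt_t)$.

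For the variational condition \eqref{eq:umd-variational}, the idea is to exploit the primal formulation of mirror descent derived in Section~\ref{sec:greedy-mirr-desc}:
\[
x_{t+1}=\argmin_{x\in \X}\left\{-\ps{\xi_t}{x}+D_F(x,x_t)\right\}.
\]
Denoting the objective by $\phi(x):=-\ps{\xi_t}{x}+F(x)-F(x_t)-\ps{\nabla F(x_t)}{x-x_t}$, we have $\nabla \phi(x)=\nabla F(x)-\nabla F(x_t)-\xi_t$. Since $x_{t+1}\in \X\cap \mathcal{D}_F$ is a minimizer over the convex set $\X$ and $\phi$ is differentiable at $x_{t+1}$, the first-order optimality condition reads
\[
\forall x\in \X,\quad \ps{\nabla F(x_{t+1})-\nabla F(x_t)-\xi_t}{x-x_{t+1}}\geqslant 0,
\]
which is precisely $\ps{\vt_{t+1}-\vt_t-\xi_t}{x-x_{t+1}}\geqslant 0$.

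The arguments above are essentially routine; the only point requiring a bit of care is to ensure that the first-order optimality condition can legitimately be invoked, namely that $\nabla F$ exists at the minimizer $x_{t+1}$. This is guaranteed by Theorem~\ref{thm:bregman-projection}, which asserts that the Bregman projection in the definition of GMD lands inside $\X\cap \mathcal{D}_F$, so differentiability of $F$ at $x_{t+1}$ is automatic. With this observation, both conditions are established and the proposition follows.
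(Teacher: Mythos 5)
Your proposal is correct and follows essentially the same route as the paper's proof: set $\vt_t=\nabla F(x_t)$, obtain condition \eqref{eq:umd-gradient-h-star} from $\nabla F(x_t)\in\partial h(x_t)$ (Proposition~\ref{prop:F+I_X}) together with the duality and differentiability of $h^*$, and obtain condition \eqref{eq:umd-variational} from the first-order optimality condition at $x_{t+1}$ in the argmin formulation of the \GMD\ step, with Theorem~\ref{thm:bregman-projection} guaranteeing $x_{t+1}\in\mathcal{D}_F$ so that the gradient of the objective exists there. The only cosmetic difference is that you start from the primal formulation $\argmin_{x\in\X}\{-\ps{\xi_t}{x}+D_F(x,x_t)\}$ while the paper rewrites the Bregman projection as $\argmin_{x\in\X}\{F(x)-\ps{\vt_t+\xi_t}{x}\}$; the two objectives differ by terms that are constant or affine in a way that yields the identical gradient, so the arguments coincide.
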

\begin{proof}
  For $t\geqslant 1$, we consider $\vt_t:=\nabla F(x_t)$. Let us prove
  that conditions \eqref{eq:umd-gradient-h-star} and
  \eqref{eq:umd-variational} are satisfied with
  $h:=F+I_{\X}$.

For $t\geqslant 1$, denote $\tilde{x}_t:=\nabla F^*(\nabla
F(x_t)+\xi_t)$, which implies $\nabla F(\tilde{x}_t)=\nabla F(x_t)+\xi_t=\vt_t+\xi_t$ thanks to Proposition~\ref{prop:subdifferential-inverse} of the appendix.
We can then rewrite the \eqref{eq:gmd} iteration as follows:
\begin{align}
x_{t+1}&=\argmin_{x\in \X}D_F(x,\ \tilde{x}_t)\nonumber\\
&=\argmin_{x\in \X}\left\{ F(x)-F(\tilde{x}_t)-\left< \nabla
                                                            F(\tilde{x}_t)
                                                            \middle|
                                                            x-\tilde{x}_t\right>
                                                            \right\}\nonumber\\
&=\argmin_{x\in \X}\left\{ F(x)-\left< \vt_t+\xi_t
                                                                         \middle| x \right>  \right\}.
\label{eq:added-num-1}
\end{align}
In other words, $x_{t+1}$ is the minimizer on $\X$ of the
convex function $F(x)-\left< \vt_t+\xi_t \middle| x \right>$.
This function is differentiable at $x_{t+1}$ because we know by Theorem~\ref{thm:bregman-projection} that $x_{t+1}\in \mathcal{D}_F:=\operatorname{int}\operatorname{dom}F$
 and $F$ is differentiable on $\mathcal{D}_F$, so that the optimality conditions for \eqref{eq:added-num-1} imply that
 \be \forall x\in \X,\quad \left< \nabla F(x_{t+1})-\vt_t-\xi_t \middle| x-x_{t+1} \right>\geqslant 0,
 \ee{eq:similar-ineq}
 which is exactly condition \eqref{eq:umd-variational} due to $\vt_{t+1}=\nabla F(x_{t+1})$.

 Thanks to Proposition~\ref{prop:F+I_X}, we know that
 $\vt_t=\nabla F(x_t)\in \partial h(x_t)$ which is equivalent to
 $x_t\in \nabla h^*(\vt_t)$ (see
 Proposition~\ref{prop:subdifferential-inverse}); thus, condition
 \eqref{eq:umd-gradient-h-star} is satisfied.\qed
\end{proof}

One may consider the following alternative definition of UMD\ iterates.
Let $\prox_h:\rset^n\rightrightarrows \X\times \rset^n$
be a multi-valued \emph{prox-mapping} defined as follows. $\prox_h
(\zeta)$ is the set of couples
$(x,\vt)$ satisfying:
\begin{align*}
&x=\nabla h^*(\zeta)\\
&\vt\in \partial h(x)\\
&\forall x'\in \X,\ \ps{\vt-\zeta}{x'-x}\geqslant 0.
\end{align*}
Then, it can be easily checked that  $(x_t,\vt_t)_{t\geqslant 1}$ is a
sequence of UMD$(h,\xi)$ iterates if and only if:
\begin{align*}
  \begin{split}
\vt_1&\in \partial h(x_1),\\
(x_{t+1},\vt_{t+1}) &\in\prox_h(\vt_{t}+\xi_t),\quad t\geqslant 1.
  \end{split}
\end{align*}

\begin{remark}[On the non-unicity of UMD iterates]
An interesting characteristic of UMD\ iterates is that
for a given sequence $(\xi_t)_{t\geqslant 1}$ of dual increments and
initial points $(x_1,\vt_1)$, there may be several possible UMD sequences
because the prox-mapping $\prox_h$ is multi-valued.
However, as soon as the subdifferential $\partial h(x)$ is at most a
singleton at each point $x\in \X$, the prox-mapping $\prox_h$ is
single-valued and the UMD sequence is thus unique; in particular,  in such case, DA and
MD coincide. This is the case, for instance, if $\X=\rset^n$ and
the regularizer $h$ is differentiable on $\rset^n$.
\end{remark}
\subsection{Simple examples}
\label{sec:simple-examples}
As an illustration, let us describe the iterates of MD, DA and UMD
in the Euclidean setting corresponding to the
$\X$-regularizer $h(x)=\frac{1}{2}\left\| x
\right\|_2^2+I_{\X}$ and the mirror map
$F(x)=\frac{1}{2}\left\| x \right\|_2^2$.
It is easy to check that the map $\nabla h^*$ is the Euclidean
projection onto $\X$.
We consider below two simple cases of the set $\X$.
We denote $(x_t,\vt_t)_{t\geqslant 1}$ a sequence of UMD$(h,\xi)$ iterates.
\paragraph{Euclidean ball.}
Here we consider the case of $\X=\bar{B}(0,1)$,
the closed unit Euclidean ball of $\rset^2$. Let $t\geqslant 1$ and assume that
$\vt_{t}+\xi_{t}$ is outside $\bar{B}(0,1)$, so that $x_{t+1}$ which is
the Euclidean projection of $\vt_{t}+\xi_{t}$ belongs to the boundary
of $\bar{B}(0,1)$; thus, $\left\| x_{t+1} \right\|_2=1$. From
this point $x_{t+1}$, an MD iteration corresponds to choosing
$\vt_{t+1}^{\text{MD}}:=x_{t+1}$, and a DA iteration
corresponds to choosing $\vt_{t+1}^{\text{DA}}:=\vt_{t}+\xi_{t}$.
Besides this, we can see that the set of points $\vt_{t+1}$ which have $x_{t+1}$ as
Euclidean projection is
$[1,+\infty)\, x_{t+1}$ and
that the set of points $\vt_{t+1}$ satisfying condition
\eqref{eq:umd-variational} is $(-\infty,1]\, (\vt_{t}+\xi_{t})$.
Therefore, the set of vectors $\vt_{t+1}$ satisfying both conditions
\eqref{eq:umd-variational} and \eqref{eq:umd-gradient-h-star} is
the convex hull of $x_{t+1}$ and $\vt_{t}+\xi_{t}$, which is represented
by a thick segment in Figure~\ref{fig:euclidean-ball-example}.

\begin{figure}[h]
  \centering
  \begin{tikzpicture}[scale=2]
    \draw[fill=gray!20] (0,0) circle (1);
    \draw[ultra thick] (0,0) node{$\bullet$} node[below]{$0$} node[above=15pt]{$\X=\bar{B}(0,1)$}
          (1,0) node{$\bullet$} node[below right]{$x_{t+1}$}
          -- (2.5,0) node{$\bullet$} node[below right]{$\vt_{t}+\xi_{t}$};
    \node (xt) at (1,0) {};
    \node (yu) at (2.5,0) {};
    \draw[<-,bend right,>=latex] (xt) to (1.25,.5)
    node[above]{$\vt_{t+1}^{\text{MD}}$};
    \draw[<-,bend right,>=latex] (yu) to (2.75,.5) node[above]{$\vt_{t+1}^{\text{DA}}$};
  \end{tikzpicture}
  \caption{MD, DA, and \UMD\ iterations when
    $\X$ is the Euclidean ball.}
  \label{fig:euclidean-ball-example}
\end{figure}

\paragraph{Simplex in $\rset^2$}
Suppose that the ambient space is $\rset^2$,
and  \[\X=\{(x^{(1)},x^{(2)})\in \rset^2_+:\,  x^{(1)}+x^{(2)}\leq 1\}\]
is the ``full simplex'' of $\rset^2$. Let $t\geqslant 1$ and assume that $\eta=\vt_{t}+\xi_{t}$ belongs to the normal cone $\N_\X(\bar x)$ of $\X$ at $\bar x=(0,1)$, i.e.,
\[\N_\X(\bar x)=\{\vt\in \rset^2:\,\vt^{(2)}\geqslant 1,\,\vt^{(2)}-\vt^{(1)}\geqslant 1\},
\]
cf. Figure \ref{fig:simplex-example}. In this case, $x_{t+1}=\bar x$ and the set of points $\vt_{t+1}$ which have
$\bar x$ as Euclidean projection is $\N_\X(\bar x)$, while the set of $\vt_{t+1}$ satisfying \eqref{eq:umd-variational} is
\[
\mathcal{S}=\{\vt\in  \rset^2:\, \vt^{(2)}\leq \eta_t^{(2)},\, \vt^{(2)}-\vt^{(1)}\leqslant \eta^{(2)}-\eta^{(1)}\}.
\]
The set of vectors satisfying both
\eqref{eq:umd-variational} and \eqref{eq:umd-gradient-h-star} is represented by the dashed area in Figure~\ref{fig:simplex-example}.
\begin{figure}
\centering
\includegraphics[width=0.5\textwidth]{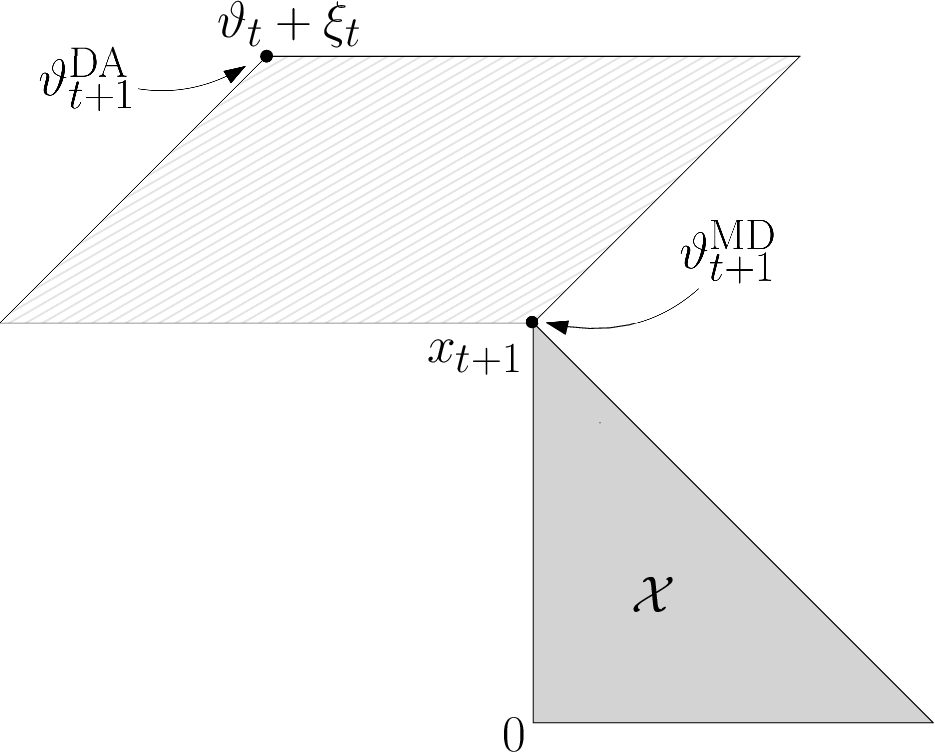}
\caption{\label{fig:simplex-example}Comparison of MD, DA, and UMD\ iterations when
  $\X$ is a ``full simplex'' in $\rset^2$.}
\end{figure}
\subsection{A class of iterates interpolating MD and DA}
\label{sec:class-iter-interp}

As an example which goes beyond MD and DA, we consider a class of UMD iterates which, at each step,
interpolates between a MD and a DA iteration; the IPDD algorithm
considered in Section~\ref{sec:igal-algorithm} is from this class.

Let $F$ be an $\mathcal{X}$-compatible mirror map, and
$h=F+I_{\mathcal{X}}$ the associated regularizer.
Let $\xi:=(\xi_t)_{t\geqslant 1}$ be a sequence of dual increments in
$\rset^n$, and $(\alpha_t)_{t\geqslant 1}$ be a sequence of coefficients in $[0,1]$.
Consider a sequence of iterates $(x_t,\vartheta_t)_{t\geqslant 1}$ as follows: $\vartheta_1\in \rset^n$, $x_1=\nabla h^*(\vartheta_1)$, and for $t\geqslant 2$
\begin{align}
\begin{split}
x_{t+1}&=\nabla h^*(\vartheta_t+\xi_t), \\
 \vartheta_{t+1}&=\alpha_t\nabla F(x_t)+(1-\alpha_t)(\vartheta_t+\xi_t).
 \end{split}
 \label{eq:interp-1}
 \end{align}
Note that selecting constant weights $(\alpha_t)_{t\geqslant 1}$ equal to
 $0$ (resp.\ equal to $1$) corresponds to DA (resp. MD).

\begin{proposition}
Sequence $(x_t,\vt_t)_{t\geqslant 1}$ as defined in \eqref{eq:interp-1} is a sequence
of UMD$(h,\xi)$ iterates.
\end{proposition}
\begin{proof}
$x_1=\nabla h^*(\vartheta_1)$ by definition. For
$t\geqslant 2$, establishing $x_t=\nabla h^*(\vartheta_t)$ is equivalent
(by Proposition~\ref{prop:subdifferential-inverse} of Appendix~\ref{sec:conv-analys-tools}) to proving that $\vartheta_t\in \partial h(x_t)$. By
definition, $\vartheta_t$ is a convex combination of $\nabla F(x_t)$
and $\vartheta_t+\xi_t$ which are both elements of a convex set $\partial h(x_t)$. Therefore, relationship \eqref{eq:umd-gradient-h-star} from Definition~\ref{def:definition-UMD} holds true.
On the other hand, for $x\in \mathcal{X}$ and $t\geqslant 1$,
\begin{align*}
\ps{\vartheta_{t+1}-\vartheta_t-\xi_t}{x-x_{t+1}}&=\ps{\alpha_t\nabla F(x_t)+(1-\alpha_t)(\vartheta_t+\xi_t)-\vartheta_t-\xi_t}{x-x_{t+1}}\\
&=\alpha_t\ps{\nabla F(x_t)-\vartheta_t-\xi_t}{x-x_{t+1}}\geqslant 0
\end{align*}
(the last inequality holds for the same reasons as the similar inequality \eqref{eq:similar-ineq} in the proof of Proposition~\ref{prop:md-special-case-umd}), implying variational condition \eqref{eq:umd-variational}.\qed
\end{proof}

\subsection{Analysis}
\label{sec:gener-bregm-diverg}
We introduce a natural extension of the Bregman divergence which will be central when analyzing the properties of UMD iterates.

\begin{definition}[Bregman divergence]
  \label{def:bregman}
  Let $g:\rset^n\to \rset\cup\{+\infty\}$ be a convex function. For
  $x\in \rset^n$ such that $\partial g(x)\neq \emptyset$,
  $x'\in \rset^n$, and $\vt\in \partial g(x)$, we define the Bregman
  divergence from $x$ to $x'$ with subgradient $\vt$ as
\[ D_g(x',x;\ \vt):=g(x')-g(x)-\left< \vt \middle| x'-x \right>. \]
\end{definition}
\begin{remark}
If $g$ is differentiable at  $x$,
the \emph{traditional} Bregman divergence from $x$ to $x'$ is well-defined and is equal to the Bregman divergence (as defined above) from $x$ to $x'$ with (the only) subgradient $\nabla g(x)$; in other words: $D_g(x',x;\ \nabla g(x))=D_g(x',x)$.

Note that a different generalization of the Bregman divergence using directional
derivatives was proposed in
\cite{dasgupta2012agglomerative,mcmahan2017survey}; it does not lead however
to a unifying view of mirror descent and dual averaging algorithms, due to the uniqueness of directional
derivatives.

\end{remark}
\begin{proposition}
  \label{prop:g-bregman-properties}
Let $g:\rset^n\to \rset\cup\{+\infty\}$ be a
lower-semicontinuous convex function.
Let $x,x',\vt,\vt'\in \rset^n$ be such that $\vt\in \partial g(x)$ and
$\vt'\in \partial g(x')$.
\begin{enumerate}[(i)]
\item\label{item:bregman-conjugate} Then,
\[ 0\leqslant D_g(x',x;\ \vt)=D_{g^*}(\vt,\vt';\ x'), \]
where $D_{g^*}$ is the Bregman
divergence associated with the Fenchel--Legendre transform $g^*$ of $g$.
\item\label{item:g-bregman-strong-convexity} Moreover, if $g$ is strongly convex (with modulus 1)\footnote{With some terminological abuse, we say that $g$ is strongly convex when it is strongly convex with modulus $1$.}
with respect to a
given norm $\left\|\,\cdot\,\right\|_{}$, $g^*$ is differentiable on
$\rset^n$ and
\[ \half\left\| x'-x \right\|^2\leqslant  D_g(x',x;\ \vt)=D_{g^*}(\vt,\vt')\leqslant \half\left\| \vt-\vt' \right\|_*^2. \]
\end{enumerate}
\end{proposition}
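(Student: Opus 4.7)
For part (i), the non-negativity $D_g(x',x;\vt)\geqslant 0$ is immediate from the definition of subgradient: $\vt\in\partial g(x)$ means exactly $g(x')-g(x)\geqslant \ps{\vt}{x'-x}$, which is the desired inequality. For the equality with $D_{g^*}(\vt,\vt';x')$, I would invoke the Fenchel--Young identity: since $g$ is proper, lower-semicontinuous and convex, the relation $\vt\in\partial g(x)$ is equivalent to $g(x)+g^*(\vt)=\ps{\vt}{x}$, and likewise to $x\in\partial g^*(\vt)$. Applying this at both pairs $(x,\vt)$ and $(x',\vt')$, I would rewrite
\[
D_g(x',x;\vt)=g(x')-g(x)-\ps{\vt}{x'-x}=g(x')+g^*(\vt)-\ps{\vt}{x'},
\]
using $g(x)=\ps{\vt}{x}-g^*(\vt)$. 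Then substituting $g(x')=\ps{\vt'}{x'}-g^*(\vt')$ and rearranging yields $g^*(\vt)-g^*(\vt')-\ps{x'}{\vt-\vt'}$, which is precisely $D_{g^*}(\vt,\vt';x')$ since $x'\in\partial g^*(\vt')$.

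For part (ii), the lower bound $\tfrac{K}{2}\|x'-x\|^2\leqslant D_g(x',x;\vt)$ is just the definition of $K$-strong convexity rewritten using a subgradient at $x$. For the differentiability of $g^*$ on $\rset^n$ and the upper bound, I would rely on the standard duality between strong convexity of $g$ with respect to $\|\cdot\|$ and $\tfrac{1}{K}$-smoothness of $g^*$ with respect to $\|\cdot\|_*$. Strong convexity of $g$ implies $g$ is coercive and strictly convex, hence $\operatorname{dom}g^*=\rset^n$ and $g^*$ is everywhere differentiable (so $D_{g^*}(\vt,\vt')$ is the usual Bregman divergence with the single subgradient $\nabla g^*(\vt')=x'$, consistent with part (i)). Smoothness of $g^*$ then gives the quadratic upper bound
\[
g^*(\vt)\leqslant g^*(\vt')+\ps{\nabla g^*(\vt')}{\vt-\vt'}+\frac{1}{2K}\|\vt-\vt'\|_*^2,
\]
which rearranges to $D_{g^*}(\vt,\vt')\leqslant \tfrac{1}{2K}\|\vt-\vt'\|_*^2$.

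The key subtlety, and the step most likely to require a careful reference or a short proof in the appendix, is the strong convexity/smoothness duality in a general (possibly non-Euclidean) normed setting. A clean way to obtain it without invoking a black box is to apply the already-established identity $D_g(x',x;\vt)=D_{g^*}(\vt,\vt';x')$ from part (i): combining it with the strong-convexity lower bound $\tfrac{K}{2}\|x-x'\|^2\leqslant D_g(x',x;\vt)$, and applying Cauchy--Schwarz via the dual-norm inequality $\ps{\vt-\vt'}{x-x'}\leqslant \|\vt-\vt'\|_*\|x-x'\|$ to the sum $D_g(x',x;\vt)+D_g(x,x';\vt')=\ps{\vt-\vt'}{x-x'}$, one obtains $\tfrac{K}{2}\|x-x'\|^2\leqslant \|\vt-\vt'\|_*\|x-x'\|$, whence $\|x-x'\|\leqslant \tfrac{1}{K}\|\vt-\vt'\|_*$. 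Substituting back yields $D_{g^*}(\vt,\vt';x')=D_g(x',x;\vt)\leqslant \ps{\vt-\vt'}{x-x'}\leqslant \tfrac{1}{K}\|\vt-\vt'\|_*^2$, and a sharper bookkeeping (retaining the strong-convexity term rather than discarding it) tightens the constant to $\tfrac{1}{2K}$.
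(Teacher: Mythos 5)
Your proof of part (i) is correct and is essentially the paper's own argument: nonnegativity from the subgradient inequality, then two applications of the Fenchel identity $g(x)+g^*(\vt)=\ps{\vt}{x}$ and $g(x')+g^*(\vt')=\ps{\vt'}{x'}$ followed by rearrangement. For part (ii) the paper does not argue at all: it simply cites \cite[Lemmas~13 and~15]{shalev2007online} for the first-order strong-convexity inequality, the differentiability of $g^*$, and the smoothness upper bound. Your route is therefore genuinely different in that it is (nearly) self-contained: you obtain differentiability of $g^*$ from supercoercivity plus strict convexity (which is exactly how the paper proves Proposition~\ref{prop:differentiability-h-star} for regularizers), and you derive the upper bound from part (i) together with the symmetrization identity $D_g(x',x;\ \vt)+D_g(x,x';\ \vt')=\ps{\vt-\vt'}{x-x'}$, the dual-norm inequality, and the elementary maximization $\max_{r\geqslant 0}\left(ar-\tfrac{K}{2}r^2\right)=a^2/(2K)$, which indeed recovers the sharp constant $1/(2K)$. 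This buys a proof that does not rely on a black-box reference, at the cost of one remaining appeal to a standard fact: the equivalence between the convex-combination definition of $K$-strong convexity (Definition~\ref{def:strong-convexity}) and the first-order form $g(x')\geqslant g(x)+\ps{\vt}{x'-x}+\tfrac{K}{2}\|x'-x\|^2$ for $\vt\in\partial g(x)$, which you assert as a rewriting but which does require a short limiting argument (and which the paper also outsources to \cite[Lemma~13]{shalev2007online}). No gap beyond that; the argument is sound.
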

\begin{proof}
\begin{enumerate}[(i)]
\item The nonnegativity of $D_g$ is an immediate consequence of the convexity of $g$.
  Using the Fenchel identity (property (\ref{item:fenchel-identity})
  from Proposition~\ref{prop:subdifferential-inverse}), we write
\begin{align*}
D_g(x',x;\ \vt)&=g(x')-g(x)-\left< \vt \middle| x'-x \right>\\
&=\left< \vt' \middle| x' \right> -g^*(\vt')-\left< \vt \middle| x \right>
                                                              +g^*(\vt)-\left<
                                                              \vt\middle|
                                                              x'-x
                                                              \right>\\
&=g^*(\vt)-g^*(\vt')-\left< \vt-\vt' \middle| x' \right> =D_{g^*}(\vt,\vt';\ x').
\end{align*}
\item The differentiability of $g^*$ and the second inequality is
  given by \cite[Lemma~15]{shalev2007online}. For the first
  inequality, we refer to \cite[Lemma~13]{shalev2007online}.\qed
\end{enumerate}
\end{proof}

We now establish the following fundamental result,
which is an extension of classical statements \cite[Lemma 3]{chen1993convergence} and \cite[Lemma 4]{nesterov2007dual}.
It underlies the analysis of the algorithms of the mirror descent type and is operational when deriving accuracy guarantees in various applications of the UMD\ presented below.

 \begin{lemma}
   \label{lm:3-point}
   Let $h$ be an $\X$-regularizer, $\xi:=(\xi_t)_{t\geqslant
     1}$ be a sequence in $\rset^n$, and $(x_t,\vt_t)_{t\geqslant 1}$ a
   sequence of UMD$(h,\xi)$ iterates. Then, for all $x\in
   \operatorname{dom}h$ and $t\geqslant 1$,
\be
\ps{\xi_t}{x-x_{t+1}}\leqslant D_h(x,x_t;\ \vt_t)-D_h(x,x_{t+1};\
                       \vt_{t+1})-D_h(x_{t+1},x_t;\ \vt_t).
\ee{eq:3-point}
As a consequence,
\be
\ps{\xi_t}{x-x_t}\leqslant D_h(x,x_t;\ \vt_t)-D_h(x,x_{t+1};\ \vt_{t+1})+D_{h^*}(\vt_t+\xi_t,\vt_t).
\ee{eq:3-point-regret}
 \end{lemma}
\begin{proof}
Let $x\in \operatorname{dom}h$ and $t\geqslant 1$. Using variational inequality
\eqref{eq:umd-variational} from the definition of UMD iterates
we write
\begin{align*}
\left< \xi_t \middle| x-x_{t+1} \right>&\leqslant \left< \vt_{t+1}-\vt_t \middle| x-x_{t+1} \right>\\
&=\left< \vt_{t+1} \middle| x-x_{t+1} \right> -\left< \vt_t \middle| x-x_t \right> +\left< \vt_t \middle| x_{t+1}-x_t \right>\\
                                       &=\left( h(x)-h(x_t)-\left< \vt_t \middle| x-x_t \right>  \right)\\
  &\qquad\qquad  -\left( h(x)-h(x_{t+1})-\left< \vt_{t+1} \middle| x-x_{t+1} \right>  \right)\\
  &\qquad\qquad - \left( h(x_{t+1})-h(x_t)-\left< \vt_t \middle| x_{t+1}-x_t \right>   \right)\\
    &=D_h(x,x_t;\ \vt_t)-D_h(x,x_{t+1};\ \vt_{t+1})-D_h(x_{t+1},x_t;\ \vt_t).
\end{align*}
The above  divergences are indeed well-defined
because $\vt_t\in \partial h(x_t)$ and $\vt_{t+1}\in \partial h(x_{t+1})$
as a consequence of the definition of UMD iterates (property
(\ref{item:y-in-partial-h-x}) from Proposition~\ref{prop:umd-properties}).

To prove \eqref{eq:3-point-regret}, we note that
\[ \left< \xi_t \middle| x_{t+1}-x_t \right>  = D_h(x_{t+1},x_t;\ \vt_t) +D_h(x_t,x_{t+1};\ \vt_{t}+\xi_t), \]
where the second Bregman divergence is well-defined
because $\vt_t+\xi_t\in \partial h(x_{t+1})$ according to property
(\ref{item:y-u-in-partial-h-x}) from
Proposition~\ref{prop:umd-properties}.
Moreover,
\[ D_h(x_t,x_{t+1};\ \vt_{t}+\xi_t)=D_{h^*}(\vt_t+\xi_t,\vt_t;\
x_t)=D_{h^*}(\vt_t+\xi_t,\vt_t), \]
where the first equality is due to
Proposition~\ref{prop:g-bregman-properties}--(\ref{item:bregman-conjugate})
and the second equality---to the differentiability of $h^*$. Combining
the two previous displays and adding to \eqref{eq:3-point} gives the result.\qed
\end{proof}
\begin{remark}
The ``high level idea'' underlying the construction of the unified mirror descent is to combine elements of DA and MD algorithms.
MD makes use of a mirror map, which is
differentiable on the interior of its domain. As shown in
Figure~\ref{fig:greedy-mirror-descent}, its gradient
$\nabla F$ is used to go from the primal space to the dual space,
where the \emph{dual iteration} is performed. Then, $\nabla F^*$ is used to
come back to the primal space, where the Bregman projection is
done.

Our first observation is that this \emph{back-and-forth} between the
primal and dual spaces can be extended to regularizers which may be
non-differentiable. Indeed, the corresponding regularizer
$h=F+I_{\mathcal{X}}$ can be seen as a ``limit'' of mirror
maps whose values outside of $\mathcal{X}$ are sent up to
$+\infty$; when the regularizer is not differentiable, the corresponding
``limit'' of the gradients corresponds to the subdifferential.
Therefore, a natural idea is to define dual UMD iterates using the
subdifferential whenever the gradient does not exist, as in the
diagram from Figure~\ref{fig:unified-mirror-descent}, meaning that
$\vartheta_t\in \partial h(x_t)$, with
$x_{t+1}=\nabla h^*(\vartheta_t+\xi_t)$, $t\geqslant 1$.
Note that both MD and DA satisfy these relationships.

However, the above recursion ``as is'' may not obey accuracy bounds
which hold for MD and DA recursions. To make this recursion
``interesting'' we need to constrain the choice of $\vartheta_t$ in
the subdifferential of $h(x_t)$. Our second observation is that in the
proof of accuracy bounds for MD and DA, results similar to
Lemma~\ref{lm:3-point} are operational (cf. e.g.
\cite[Lemma~3]{chen1993convergence} and \cite[Lemma~4]{nesterov2007dual}), with MD and DA satisfying inequality \eqref{eq:3-point} of Lemma~\ref{lm:3-point} ``by construction''. One easily check that the variational condition
\eqref{eq:umd-variational} is exactly the constraint needed for 
inequality \eqref{eq:3-point} to hold.
\end{remark}

An immediate consequence of Lemma~\ref{lm:3-point} and property
(\ref{item:g-bregman-strong-convexity}) of
Proposition~\ref{prop:g-bregman-properties} is the following
 inequality (sometimes called \emph{regret bound}), which
extends and unifies classical guarantees on MD\ and DA---cf., e.g.,
\cite[Lemma 2.1]{nemirovski2009robust}, \cite[Lemma 4]{nesterov2007dual}, \cite[Theorems 5.2 \& 5.4]{bubeck2012regret}, etc.
\begin{corollary}
\label{cor:regret-bound}
Let $h$ be an $\X$-regularizer which is assumed to be
strongly convex with respect to some norm
$\left\|\,\cdot\,\right\|$, and $\xi:=(\xi_t)_{t\geqslant 1}$ be a
sequence in $\rset^n$. Let $(x_t,\vt_t)_{t\geqslant 1}$ be a sequence
of UMD$(h,\xi)$ iterates. Then for $T\geqslant 1$ and $x\in \operatorname{dom}h$,
\begin{equation}
\label{eq:regret-bound-norm}
\sum_{t=1}^T\ps{\xi_t}{x-x_t}\leqslant D_h(x,x_1;\ \vt_1)-D_h(x,x_{T+1};\ \vt_{T+1})+\frac{1}{2}\sum_{t=1}^T\left\| \xi_t \right\|_{*}^2
\end{equation}
where $\left\|\,\cdot\,\right\|_*$ is the norm conjugate to $\left\|\,\cdot\,\right\|$.
\end{corollary}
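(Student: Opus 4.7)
The plan is to sum inequality \eqref{eq:3-point-regret} of Lemma~\ref{lm:3-point} over $t=1,\dots,T$. After rearranging, one obtains
\[
\sum_{t=1}^T \ps{\xi_t}{x-x_t} \leq \sum_{t=1}^T \left( D_h(x,x_t;\vt_t) - D_h(x,x_{t+1};\vt_{t+1}) \right) + \sum_{t=1}^T D_{h^*}(\vt_t+\xi_t,\vt_t).
\]
The first sum on the right-hand side telescopes to $D_h(x,x_1;\vt_1) - D_h(x,x_{T+1};\vt_{T+1})$, which matches the first two terms of \eqref{eq:regret-bound-norm}. The Bregman divergences involved are well-defined thanks to property~(\ref{item:y-in-partial-h-x}) of Proposition~\ref{prop:umd-properties}, which ensures $\vt_t\in\partial h(x_t)$ for every $t\geq 1$.

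For the residual sum, I would invoke Proposition~\ref{prop:g-bregman-properties}(\ref{item:g-bregman-strong-convexity}) applied to $g=h$. Since $h$ is $K$-strongly convex with respect to $\left\|\,\cdot\,\right\|$, its Legendre--Fenchel transform $h^*$ is differentiable on $\rset^n$ (as also asserted by Proposition~\ref{prop:differentiability-h-star}) and satisfies the dual smoothness bound $D_{h^*}(\vt+\xi,\vt) \leq \tfrac{1}{2K}\|\xi\|_*^2$. Applying this term-by-term with $\vt=\vt_t$ and $\xi=\xi_t$ yields $\sum_{t=1}^T D_{h^*}(\vt_t+\xi_t,\vt_t) \leq \tfrac{1}{2K}\sum_{t=1}^T \|\xi_t\|_*^2$, which is exactly the last term of \eqref{eq:regret-bound-norm}.

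No serious obstacle is expected: the proof is a direct summation followed by one termwise application of the duality between strong convexity of $h$ and Lipschitz-smoothness of $\nabla h^*$. Both ingredients are already in place from the machinery developed in Sections~\ref{sec:unif-mirr-desc} and \ref{sec:gener-bregm-diverg}, so nothing beyond combining Lemma~\ref{lm:3-point} with Proposition~\ref{prop:g-bregman-properties}(\ref{item:g-bregman-strong-convexity}) is needed.
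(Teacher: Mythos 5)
Your proof is correct and is exactly the argument the paper intends: Corollary~\ref{cor:regret-bound} is presented there as an immediate consequence of Lemma~\ref{lm:3-point} and Proposition~\ref{prop:g-bregman-properties}--(\ref{item:g-bregman-strong-convexity}), obtained by summing \eqref{eq:3-point-regret}, telescoping, and bounding each $D_{h^*}(\vt_t+\xi_t,\vt_t)$ by $\tfrac{1}{2K}\left\| \xi_t \right\|_*^2$. One minor remark: your telescoping uses $D_h(x,x_{t+1};\ \vt_{t+1})$ as the subtracted term, which is what the proof of Lemma~\ref{lm:3-point} actually establishes (the printed statement of \eqref{eq:3-point-regret} carries $\vt_t$ in that slot, apparently a typo), so you are correctly relying on the form of the inequality that makes the sum collapse.
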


\section{Accuracy bounds for convex optimization}
\label{sec:guar-conv-nonc}
Throughout this section,
let $\X$ be a convex and closed domain of $\rset^n$, $h$ an
$\mathcal{X}$-regularizer, $\left\|\,\cdot\,\right\|$ some norm in
$\rset^n$, and we assume that $h$ is strongly convex w.r.t. $\left\|\,\cdot\,\right\|$.

%
\subsection{UMD for nonsmooth convex optimization}
\label{sec:lipsch-conv-optim}
Let $M>0$, and let $f:\rset^n\to\mathbb{R}\cup\{+\infty\}$ be a convex
function with domain $\operatorname{dom}f\supset\X$,  which is
sub-differentiable on $\X$, with bounded subgradients:
\[ \forall x\in \X,\ \forall \xi\in \partial f(x) ,\quad \left\| \xi \right\|_{*}\leqslant M.   \]
We suppose that the optimization problem
\begin{equation}
\label{eq:6}
f_*=\min_{x\in \X}f(x)
\end{equation}
is solvable, and denote $x_*\in \X$ a minimizer. Let
$(\gamma_t)_{t\geqslant 1}$ be a sequence of positive \emph{step-sizes}.
We consider a sequence  $(x_t,\vt_t)_{t\geqslant 1}$ of UMD$(h,\xi)$
iterates associated with dual increments
$\xi:=(-\gamma_tf'(x_t))_{t\geqslant 1}$, where $f'(x_t) \in \partial f(x_t)$. Namely, $\vt_1\in
\partial h(x_1)$, and for $t\geqslant 1$,
\[ (x_{t+1},\vt_{t+1})\in \prox_h(\vt_t-\gamma_tf'(x_t)). \]

The following result provides accuracy estimates for approximate
solutions $\overline{x}_T$ by UMD after $T$ iterations, computed either as
\[ \overline{x}_T=\biggl({\sum_{t=1}^{T}\gamma_t}\biggr)^{-1}\sum_{t=1}^{T}\gamma_tx_t,\qquad \text{or }\qquad \overline{x}_T\in \Argmin_{t=1,\dots,T}f(x_t). \]
In particular, it recovers known guarantees for the mirror descent
\cite[Theorem 3.3.5]{nemirovsky1983problem}, \cite[Theorem 4.1]{beck2003mirror} and
dual averaging \cite[Theorem 1]{nesterov2009primal} algorithms. The following result is a straightforward consequence of Corollary \ref{cor:regret-bound}.
\begin{proposition}
  \label{prop:lipschitz}
  Suppose that $x_*\in \operatorname{dom}h$.
  Then for all $T\geqslant 1$,
  \[ f(\overline{x}_T)-f_*\leqslant\biggl({\sum_{t=1}^T\gamma_t}\biggr)^{-1}\left[{D_h(x_*,x_1;\
      \vt_1)+\frac{M^2}{2}\sum_{t=1}^T\gamma_t^2}\right]. \]
Let $\Omega_{\X}$ be an upper estimate of $\sqrt{2D_h(x_*,x_1;\
    \vt_1)}$.\footnote{In the case of compact $\X$ one can take
  $\Omega_X=\left[\max_{x\in \X}2D_h(x,x_1;\ \vt_1)\right]^{1/2}$. Note that in this case due to strong convexity of $D_h(\cdot,x_1,\vt_1)$ one has $\Omega_\X\geq \max_{x\in \X}\|x-x_1\|$.}
UMD algorithm with constant step-sizes
  \[ \gamma_t\equiv\gamma=\frac{\Omega_{\X}}{M\sqrt{T}},\qquad t\geqslant 1, \]
  satisfies:
  \[ f(\overline x_T)-f_*\leqslant \frac{\Omega_{\X}M}{\sqrt{T}}. \]
\end{proposition}

Next, following \cite{Naz18,nesterov2015quasi}, let us consider an
alternative algorithm whose iterates
$(x_t,y_t,\vt_t)_{t\geqslant 1}$ are defined as $x_1=y_1$, $\vt_1\in \partial h(x_1)$, and for
$t\geqslant 1$,
\begin{align}
\begin{split}
(x_{t+1},\vt_{t+1})&\in \prox(\vt_t-\gamma_tf'(y_t)),\\
y_{t+1}&=(1-\nu_t)y_t+\nu_tx_{t+1},
\label{eq:NNS1}
\end{split}
\end{align}
where  $\nu_t\in (0,1)$ is given by
\[ \nu_t=\gamma_{t+1}\biggl(  \sum_{s=1}^{t+1}\gamma_s\biggr)^{-1},
\qquad t\geqslant 1.  \]
Note that $(x_t,\vt_t)_{t\geqslant 1}$ are UMD$(h,\xi)$ iterates, here
$\xi:=(-\gamma_tf'(y_t))_{t\geqslant 1}$.

The following statement provides accuracy bounds for the last iterate $y_T$ of the recursion \eqref{eq:NNS1} and generalizes respective accuracy
bounds of \cite{Naz18,nesterov2015quasi}.
\begin{proposition}
  \label{prop:quasi-monotone}
  Suppose that $x_*\in \operatorname{dom}h$.
Then for $T\geqslant 1$,
  \[ f(y_T)-f_*\leqslant \biggl({\sum_{t=1}^T\gamma_t}\biggr)^{-1}\left[D_h(x_*,x_1;\ \vt_1)+\frac{M^2}{2}\sum_{t=1}^T\gamma_t^2\right]. \]
  In particular, for constant
  step-sizes
  \[ \gamma_t\equiv\gamma=\frac{\Omega_{\X}}{M\sqrt{T}},\qquad t\geqslant 1, \]
  where $\Omega_{\X}$ is an upper bound for of $\sqrt{2D_h(x_*,x_1;\
    \vt_1)}$, one has
  \[ f(y_T)-f_*\leqslant \frac{\Omega_{\X}M}{\sqrt{T}}. \]
\end{proposition}
\subsection{UMD for smooth convex optimization}
\label{sec:umd-smooth-convex}
In this section, in the context of smooth convex optimization, we first present a class
of algorithms which generalizes gradient descent and enjoys a $1/T$
convergence rate. The new algorithms defined below in
Section~\ref{sec:gold:-new-algorithm} belong to this class. Moreover, we also
define a generalization of Nesterov's accelerated gradient method
which guarantees a $1/T^2$ convergence rate.

Let $f:\rset^n\to\mathbb{R}\cup\{+\infty\}$  be a convex function which is continuously differentiable on $\X$ with Lipschitz-continuous gradient, i.e.,
\be \left\| \nabla f(x)-\nabla f(x') \right\|_*\leqslant L\left\| x-x' \right\| ,\qquad x,x'\in \X. \ee{eq:lip-grad}
We assume that the problem
\[f_*=\min_{x\in \X} f(x)
\] is solvable and denote $x_*\in\mathcal{X}$ a minimizer.
Let $(\gamma_t)_{t\geqslant 1}$ be a sequence of positive step-sizes, and $(x_t,\vt_t)_{t\geqslant 1}$
be UMD$(h,\xi)$ iterates with
$\xi:=(-\gamma_t\nabla f(x_t))_{t\geqslant 1}$. In other words, $\vt_1\in
\partial h(x_1)$, and
\[ x_{t+1}=\prox_h(\vt_t-\gamma\nabla f(x_t)),\qquad t\geqslant 1. \]
\begin{theorem}
\label{thm:smooth-convex}
Suppose that $x_*\in \operatorname{dom}h$. Assume that step-sizes $(\gamma_t)_{t\geqslant 1}$ are chosen in such a way that condition
\be
\gamma_t D_f(x_{t+1},x_t)\leqslant D_h(x_{t+1},x_t;\ \vt_t)
\ee{eq:back}
is satisfied for all $t\geq 1$, which is always the case for $\gamma_t\leq 1/L$.
Then
for $T\geqslant 1$, one has
\begin{equation}
\label{eq:smooth_0}
f(x_{T+1})-f_*\leqslant \biggl(\sum_{t=1}^T\gamma_t\biggr)^{-1}D_h(x_*,x_1;\ \vt_1).
\end{equation}
In particular, for constant step-sizes $\gamma_t\equiv\gamma=1/L$, one has
 \[ f(x_{T+1})-f_*\leqslant{LD_h(x_*,x_1;\ \vt_1)\over T}. \]
\end{theorem}

We now aim at presenting an ``UMD analogue'' of Nesterov's accelerated
gradient descent algorithm, which unifies and generalizes classic
algorithmic schemes for smooth optimization, like e.g.\ optimal scheme for smooth minimization from \cite{nesterov2005smooth}, optimal method from \cite{lan2012optimal}, etc.

Let points $(x_t,y_t,z_t,\vt_t)_{t\geqslant 1}$
 satisfy $x_1=z_1=\nabla h^*(\vt_1)$, and for $t\geqslant 1$,
\begin{subequations}
\begin{align}
\label{eq:acc-y}
&y_{t}=(1-\nu_{t})z_{t}+\nu_{t}x_{t}\\
\label{eq:acc-prox}
&(x_{t+1},\vt_{t+1})\in \prox_h(\vt_t-\gamma_t\nabla f(y_t))\\
\label{eq:acc-z}
&\displaystyle z_{t+1}=(1-\nu_t)z_t+\nu_tx_{t+1},
\end{align}
\end{subequations}
with positive step-sizes $(\gamma_t)_{t\geqslant 1}$ and
$(\nu_t)_{t\geqslant 1}$ a sequence in $(0,1)$.

We refer to $(x_t,y_t,z_t,\vt_t)_{t\geqslant 1}$ as {\em accelerated unified mirror descent (AUMD)} iterates.
Note that AUMD iterates are well defined. Indeed, it follows from the
above definition that $(x_t,\vt_t)_{t\geqslant 1}$ is a sequence of
UMD$(h,\xi)$ iterates (associated with dual increments
$\xi:=(-\gamma_t\nabla f(y_t))_{t\geqslant 1}$). Therefore, $f$ being
differentiable on $\mathcal{X}$, $x_{t+1}$ do exist
whenever $y_t\in \X$. To show the latter, $y_t$ being a convex
combination of $x_t$ and $z_t$, it suffices to check that $x_t,z_t$
are well-defined and belong to $\X$, which can be done
recursively. Indeed, $x_1,z_1\in \X$ by construction. Then, assuming
that $x_t,z_t\in \X$, we get that $y_t\in \X$. As a result, $x_{t+1}$
is well-defined and belongs to $\X$, and so does $z_{t+1}$, being a
convex combination of $z_t$ and $x_{t+1}$.

The following result states the accuracy guarantees for the AUMD algorithm \eqref{eq:acc-y}--\eqref{eq:acc-z} and generalizes corresponding statements from
\cite[Theorem 2]{lan2012optimal} and \cite[Theorem 4.4]{beck2009fast}.
\begin{theorem}
 \label{thm:acceleration}
 Suppose that $x_*\in \operatorname{dom}h$.
 Let $\gamma_1=1/L$ and for $t\geqslant 1$,
\be\gamma_{t+1}=(2L)^{-1}\left(
  1+\sqrt{1+(2L \gamma_{t})^2} \right),\qquad \nu_t=(L \gamma_t)^{-1}. \ee{eq:nest-steps}
 Then for  $T\geqslant 1$, it holds:
 \be f(z_{T+1})-f_*\leqslant \frac{4L  D_h(x_*,x_1;\ \vt_1)}{(T+1)^2}. \ee{eq:nest-bound}
\end{theorem}

\section{APDD~and IPDD: new algorithms for constrained optimization}
\label{sec:gold:-new-algorithm}
In constrained convex
optimization problems with minimizer lying at the boundary of the
feasible domain, (primal) mirror descent algorithm often shows fast convergence,
but is sensitive to the selected step-size, whereas dual averaging
achieves slower convergence, but is robust to the step-size choice (cf. the  discussion below).

We introduce two new algorithms from the UMD family which we refer to
as \emph{Alternating Primal-Dual Descent (APDD)} and
\emph{Interpolating Primal-Dual Descent (IPDD)}, and which belong to
the class described in Section~\ref{sec:class-iter-interp}. They,
therefore, benefit from the theoretical guarantees from
Section~\ref{sec:guar-conv-nonc}. The idea behind their construction
is to alternate between different iterations types in order to combine
the advantages of mirror descent and dual averaging so that they
perform better than (or as well as) the best of both algorithms with
moderate computational overhead.\footnote{APDD~and
  IPDD~algorithms should be seen as mere examples having nothing
  special which sets them apart from other possible UMD
  implementations.}
In the reminder of this section, $f: \rset^d\to \rset$ is a differentiable function.

\subsection{The APDD algorithm}
\label{sec:gold-algorithms}

Every $k$ steps (where $k\geqslant 1$ is a parameter) the APDD
algorithm computes both a mirror descent update and a dual averaging
update, and then compares the value of the objective function at the
two primal iterates thus obtained. The algorithm then records the
so-determined best update and performs for the remaining $k-1$ steps
dual averaging updates. In a variant of the algorithm, the best update is determined over
not one, but over several steps ahead.

\begin{definition}[$k$-APDD algorithm]
  \label{def:gold}
  Let $k\geqslant 1$ be an integer, $\gamma>0$,
$F$ be an $\mathcal{X}$-compatible mirror map, $h=F+I_{\mathcal{X}}$,
  and $(x_1,\vt_1)\in
  \mathcal{X}\times \rset^d$ such that $x_1=\nabla h^*(\vt_1)$.

  The primal and dual iterates $(x_t,\vt_t)_{t\geqslant 1}$ of the $k$-APDD algorithm with step-size $\gamma$ and
  objective function $f$ are defined as $x_2=\nabla
  h^*(\vt_1-\gamma\nabla f(x_1))$ and for $t\geqslant 2$,
\begin{itemize}
\item if $t\equiv 2 \mod k$, $\vt_t\in \Argmin_{\vt\in \left\{ \vt_t^{\text{MD}},\vt_t^{\text{DA}} \right\}}f\left( \nabla h^*(\vt-\gamma\nabla f(x_t)) \right),  $
\\
where $\vt_t^{\text{MD}}=\nabla F(x_t)$ and $\vt_t^{\text{DA}}=\vt_{t-1}-\gamma\nabla f(x_{t-1})$;
\item if $t\not\equiv 2\mod k$,
$ \vt_t=
\vt_{t-1}-\gamma\nabla f(x_{t-1})
$.\item $x_{t+1}=\nabla h^*(\vt_t-\gamma\nabla f(x_t))$.
\end{itemize}
\end{definition}
Observe that numerical complexity of the iterate of the APDD does not exceed  twice the complexity of the iterate of the MD/DA-algorithm.

\subsection{The IPDD algorithm}
\label{sec:igal-algorithm}

The IPDD algorithm performs at time $t\geqslant 1$ an iteration which interpolates DA and MD
(with coefficient $\alpha$) if the condition
\be\gamma D_f(x_{t+1},x_t)\leqslant D_h(x_{t+1},x_t;\ \vt_t)\ee{eq:conv-cond}
is
satisfied at this iteration.\footnote{Recall, that satisfaction of such condition (cf. \eqref{eq:back}) at each iteration of the method, ensures that the bound \eqref{eq:smooth_0} of Theorem \ref{thm:smooth-convex} holds true.}  When this condition is not satisfied, the IPDD
algorithm performs a DA iteration instead.

\begin{definition}[The $\alpha$-IPDD algorithm]
  \label{def:igal}
  Let $\alpha\in (0,1]$, $\gamma>0$;
let also $F$ be a $\mathcal{X}$-compatible mirror map, $h=F+I_{\mathcal{X}}$,
  and $(x_1,\vt_1)\in
  \mathcal{X}\times \rset^d$ such that $x_1=\nabla h^*(\vt_1)$.

  The primal and dual iterates $(x_t,\vt_t)_{t\geqslant 1}$ of the $\alpha$-IPDD algorithm with step-size $\gamma$ and
  objective function $f$ are defined as $x_2=\nabla
  h^*(\vt_1-\gamma\nabla f(x_1))$ and for $t\geqslant 2$,
\begin{align*}
  \vt_t^{(0)}&=\alpha\nabla F(x_t)+(1-\alpha)(\vt_{t-1}-\gamma\nabla f(x_{t-1}))\\
  x_{t+1}^{(0)}&=\nabla h^*(\vt_t^{(0)}-\gamma\nabla f(x_t))\\
\vt_t&=
       \begin{cases}
        \vt_t^{(0)},&\text{if
          $\gamma D_f(x_{t+1}^{(0)},x_t)\leqslant D_h(x_{t+1}^{(0)},x_t; \vt_t^{(0)})$} \\
        \vt_{t-1}-\gamma\nabla f(x_{t-1}),&\text{otherwise},
       \end{cases}\\
  x_{t+1}&=\nabla h^*(\vt_t-\gamma\nabla f(x_{t})).
\end{align*}
\end{definition}
Note that at a given step, the IPDD
 algorithm first computes an iteration which interpolates DA and
 MD if condition \eqref{eq:conv-cond} is satisfied; additionally, a DA iteration is computed if condition \eqref{eq:conv-cond} is not satisfied.
 Thus, in the worst case, the algorithm computes two UMD iterations per step.

\begin{remark}
Clearly, the use of the APDD and IPDD algorithms makes no sense in situations
where mirror descent and dual averaging produce identical iterates,
which is the case, in particular, when the problem is unconstrained.  Even in a constrained case, when the algorithm iterates belong to the interior of $\X$ (e.g.,
when the problem solution is an interior point of $\X$), the mirror descent and dual
averaging update will end up coinciding and the APDD and IPDD
algorithms will no longer provide any improvement. However, the situation changes dramatically when the iterates of the method lie at the boundary of the feasible set.

From now on, let us consider a convex optimization problem with minimizer  at
the boundary of the feasible set $\X$. For simplicity, we discuss the
Euclidean instances of the algorithms, meaning that the associated mirror
map (resp.\ regularizer) is $F=\frac{1}{2}\left\|\,\cdot\,\right\|_2^2$
(resp. $h=F+I_{\mathcal{X}}$).

As the algorithm iterates reach the boundary of the feasible set, possible UMD iterates
differ. Informally, with dual increments $\xi_t=-\gamma \nabla f(x_t)$ in the normal cone of $\X$ at $x_t$, the dual iterate $\vt_t$ of DA (the iterate ``before the projection'') gets farther and
farther away from $\mathcal{X}$. By contrast, the
dual iterate $\vartheta_t$ of MD always belongs to the feasible set, by
definition.
Consequently, successive primal DA iterates becomes closer to each
other and more conservative, compared to the MD iterates which vary more
aggressively. As a result, MD with large
step-sizes produces iterates which vary too much (thus compromising
convergence), whereas variations of DA iterates with large
step-sizes are quickly attenuated exactly because the step-sizes are large (and dual iterates are ``far from $\X$''), implying that dual iterates $\vartheta_t$
are getting away from $\X$ even faster. This difference is clearly observed in
the numerical experiments below.

An alternative intuition on the robustness of DA to the choice
of step-sizes is as follows. As stated in Theorem~\ref{thm:smooth-convex}, in the case of the smooth objective function, accuracy bounds for the UMD still hold
if condition
\eqref{eq:conv-cond} 
is satisfied for all $t\geqslant 1$.  Observe that
\[ D_h(x_{t+1},x_t;\ \theta_t)=D_F(x_{t+1},x_t)+\ps{\nabla F(x_t)-\vartheta_t}{x_{t+1}-x_t}. \]
In the Euclidean case, when $\nabla F(x_t)=x_t$ and $x_{t+1}$ are in
$\X$, the farther $\vartheta_t$ from $\mathcal{X}$, the
larger the above quantity.  As $\vartheta_t$ gets farther and
farther away from $\mathcal{X}$, the ratio
$D_h(x_{t+1},x_t;\ \vartheta_t)/D_f(x_{t+1},x_t)$ becomes large,
and condition \eqref{eq:conv-cond} for DA iterates is satisfied with a large step-size $\gamma$.

The $k$-APDD algorithm mostly performs DA iterations, and MD iterations once
in a while. Each MD iteration brings the dual point $\vt_t$ back to
the feasible set $\mathcal{X}$, which has the effect of making subsequent iterations
more aggressive. When the step-size is too large, these iterations may lead to a temporary increase of the objective value,
but the following DA iterations push the dual point $\vt_t$
away from $\mathcal{X}$, recovering proper convergence in most cases.

The $\alpha$-IPDD algorithm performs a ``moderately aggressive
iteration'' most of the time which is an interpolation between DA and MD iterations, with
coefficient $\alpha\in(0,1]$. When such interpolation is
too aggressive for maintaining proper convergence, a DA iteration is
performed instead, pushing the dual point $\vt_t$ further away from
$\mathcal{X}$, thus making subsequent iterations more conservative. Occasionally, aggressive iterations may lead to a
temporary increase of the objective value, which is always less significant than in the case of APDD.
In the numerical experiments we report on below, we choose $\alpha=.1$
which seems to be a good trade-off between aggressiveness (for achieving
fast convergence) and conservativeness (for maintaining the dual point
$\vt_t$ in a region were convergence is sustainable).
\end{remark}

\subsection{Numerical experiments}
To illustrate the numerical performance of the proposed algorithms, we present here results of a preliminary computational experiment involving comparison of APDD and IPDD with existing algorithms in two optimization problems arising in the statistical treatment of large datasets.
\subsubsection{Least-squares regression}
\label{sec:numer-exper-with}
We consider a least-squares regression problem using the training
sample of the BlogFeedback dataset from the UCI Machine Learning
Repository\footnote{\url{https://archive.ics.uci.edu/ml/datasets/BlogFeedback}} with rescaled---multiplied by $0.005$---features (regressors). The corresponding least-squares problem with dimensions $n=52397$ and $d=280$ writes
\begin{align*}
  \min_{x\in \X}\left\{f(x)=\frac{1}{n}\sum_{i=1}^n(b_i-\ps{x}{a_i})^2\right\}
\end{align*}
where \[
\X=\left\{ x\in \rset^d:\, \| x \|_2 \leqslant 1 \right\}\]
The spectral norm of the matrix $Q={1\over n}\sum_{i=1}^na_ia_i^T$ 
(the Lipschitz constant of the gradient of the objective function) is $ \lambda_{\max}(Q)=571.34$; the matrix is ill conditioned with only 20 eigenvalues exceeding $10^{-6}\lambda_{\max}(Q)$.  A high accuracy approximation of the optimal value $f_*$ (with optimal solution at the boundary of the feasible set) is first computed with a long run of
projected gradient descent with manually tuned step-size.

We compute approximate solutions by the following algorithms: mirror descent (MD), dual averaging (DA), 1-APDD,
5-APDD, and $.1$-IPDD. We consider the Euclidean setting which
corresponds to the mirror map
$F=\half\left\| \,\cdot\, \right\|_2^2$ and regularizer
$h=F+I_{\mathcal{X}}$. Iterations are initialized with $\vt_1=0$ and
$x_1=\nabla h^*(\vt_1)=0$, we run $T=200$ steps of each methods with the constant step-size.
The results of the experiment are presented in Figure \ref{fig:plots} where we plot the evolution of the suboptimality gap $f(x_t)-f_*$ for several values of the step-sizes. When the step-size is less than its ``theoretically justified'' value $\gamma_*=(\lambda_{\max}(Q))^{-1}=0.0018$ all algorithms converge slowly.  For $\gamma=\gamma_*$, all algorithms except DA converge linearly, with MD and 1-APDD achieving
the fastest convergence. MD does not
converge for $\gamma\geq 10\gamma_*$, DA converges sublinearly, and the APDD and the IPDD
algorithms exhibit linear convergence (with APDD showing temporal increase in the objective value); convergence of the IPDD algorithm seems to be unaffected by the large value of the step-size.

 \begin{figure}
\caption{Least-squares regression: evolution of suboptimality of approximate solution by mirror descent, dual
  averaging, 1-APDD, 5-APDD, and IPDD algorithms for different values of $\gamma$.}
\label{fig:plots}
   \centering
 \footnotesize
 \begin{subfigure}{.5\textwidth}
 \begin{scaletikzpicturetowidth}{\textwidth}
 \input{plots3e-08.tex}
 \end{scaletikzpicturetowidth}
 \caption{$\gamma=\gamma_*$}
 \end{subfigure}~\begin{subfigure}{.5\textwidth}
 \begin{scaletikzpicturetowidth}{\textwidth}
 \input{plots1e-07.tex}
 \end{scaletikzpicturetowidth}
 \caption{$\gamma=2.5\gamma_*$}
 \end{subfigure}
 \medskip

 \begin{subfigure}{.5\textwidth}
 \begin{scaletikzpicturetowidth}{\textwidth}
 \input{plots1e-06.tex}
 \end{scaletikzpicturetowidth}
 \caption{$\gamma=25\gamma_*$}
 \end{subfigure}~
 \begin{subfigure}{.5\textwidth}
 \begin{scaletikzpicturetowidth}{\textwidth}
 \input{plots1.tex}
 \end{scaletikzpicturetowidth}
 \caption{$\gamma=10^7\gamma_*$}
 \end{subfigure}
 \end{figure}
\subsubsection{Logistic regression}
\label{sec:numer-exper-with-1}
We consider a logistic regression problem using the training
sample of the Madelon dataset from the UCI Machine Learning
Repository\footnote{\url{https://archive.ics.uci.edu/ml/datasets/Madelon}}
with all features multiplied by $10^{-3}$.
The corresponding minimization problem of dimensions $n=2000$ and $d=500$ writes
\[
  \min_{x\in \X}\left\{f(x)=\frac{1}{n}\sum_{i=1}^n\log \left(1+e^{-b_i \ps{x}{a_i}} \right)\right\}
\]
with the same as in the preceding example feasible set
\[ \X=\left\{ x\in \rset^d:\, \left\| x \right\|_2 \leqslant 1 \right\}. \]
In the present case, the spectral norm of the matrix $Q={1\over n}\sum_{i=1}^na_ia_i^T$ 
(the Lipschitz constant of the gradient of the objective function) is $ \lambda_{\max}(Q)=119.16$.   An estimate of the  optimal value $f_*$ (with optimal solution at the boundary of the feasible set) is first computed with a long run of
projected gradient descent with manually tuned step-size.

We consider the following algorithms: mirror descent (MD), dual averaging (DA),
20-APDD, 20-7-APDD,\footnote{The second parameter in the definition of the $k$-$\ell$-ADPP corresponds to the $\ell$-step ahead computation of the objective when determining the choice of update every $k$ steps of the algorithm.} and $.1$-IPDD.
As in Section~\ref{sec:numer-exper-with}, we consider the Euclidean setup of the problem,
with the mirror map $F=\half\left\| \,\cdot\, \right\|_2^2$ and regularizer
$h=F+I_{\mathcal{X}}$. Iterations are initialized with $\vt_1=0$ and
$x_1=\nabla h^*(\vt_1)=0$. We run $T=5000$ steps of each algorithm for several values of constant step-sizes; the results are presented in Figure \ref{fig:plots2}.

In view of the problem parameters, the ``theoretically justified'' choice of the step-size is $\gamma_*=(\lambda_{\max}(Q))^{-1}=0.0084$.
In our experiments, we observe linear convergence of all algorithms except for DA when the algorithm step-size $\gamma<0.07(\approx 8\gamma_*)$. MD does not converge for  $\gamma=0.07$, DA converges
sublinearly, the remaining algorithms achieving the same
linear convergence. When $\gamma\geq .1$, 20-APDD and 20-7-APDD algorithms start to oscillate without converging, while DA
converges sublinearly, and IPDD continues exhibiting linear convergence. We observe yet
another regime for larger step-sizes, e.g.\ $\gamma=1$ or $\gamma=200$: MD does not converge, IPDD
converging linearly, 20-APDD still oscillating, but, surprisingly,
20-7-APDD enjoying the same sublinear convergence as DA.

The observed results may be summarized as follows: in our experiments, MD converges linearly only when run with a properly selected step-size,
while DA always converges sublinearly. The $k$-$\ell$-APDD algorithm is robust w.r.t. the step-size choice, it converges linearly in a wide range of step-sizes, and, same as DA, enjoys sublinear convergence for large step-sizes. The IPDD algorithm converges linearly and seems to be insensitive to the choice of step-size.

\begin{figure}
\caption{Logistic regression: evolution of suboptimality of approximate solution by mirror descent, dual
  averaging, 20-APDD, 20-7-APDD and $.1$-IPDD algorithms for different values of $\gamma$.}
\label{fig:plots2}
   \centering
 \footnotesize
 \begin{subfigure}{.5\textwidth}
 \begin{scaletikzpicturetowidth}{\textwidth}
 \input{plots20.06.tex}
 \end{scaletikzpicturetowidth}
 \caption{$\gamma=0.06$}
 \end{subfigure}~\begin{subfigure}{.5\textwidth}
 \begin{scaletikzpicturetowidth}{\textwidth}
 \input{plots20.07.tex}
 \end{scaletikzpicturetowidth}
 \caption{$\gamma=0.07$}
 \end{subfigure}
 \medskip

 \begin{subfigure}{.5\textwidth}
 \begin{scaletikzpicturetowidth}{\textwidth}
 \input{plots20.1.tex}
 \end{scaletikzpicturetowidth}
 \caption{$\gamma=0.1$}
 \end{subfigure}~\begin{subfigure}{.5\textwidth}
 \begin{scaletikzpicturetowidth}{\textwidth}
 \input{plots2200.tex}
 \end{scaletikzpicturetowidth}
 \caption{$\gamma=200$}
 \end{subfigure}
 \end{figure}

\section*{Acknowledgements}

The authors are grateful to Roberto Cominetti, Cristóbal Guzmán, Nicolas Flammarion and Sylvain Sorin for inspiring discussions and suggestions.  A.\ Juditsky was supported by MIAI {@} Grenoble Alpes (\texttt{ANR-19-P3IA-0003}). J.\ Kwon was supported by a public grant as part of the ``Investissement d'avenir'' project (\texttt{ANR-11-LABX-0056-LMH}), LabEx LMH.

\bibliographystyle{plain}
\bibliography{unifying-greedy-lazy}

\appendix

\section{Convex analysis tools}
\label{sec:conv-analys-tools}
\begin{definition}[Lower-semicontinuity]
A function $g:\rset^n\to \rset\cup\{+\infty\}$ is
lower-semicontinuous if for all $c\in \rset$, the sublevel set
$\left\{ x\in \rset^n\,\colon f(x)\leqslant c \right\}$ is closed.
\end{definition}
One can easily check that the sum of two lower-semicontinuous
functions is lower-semicontinuous. Continuous functions and characteristic
functions $I_{\X}$ of closed sets $\X\subset
\rset^n$ are examples of lower-semicontinuous functions.

\begin{definition}[Strong-convexity]
  \label{def:strong-convexity}
Let $g:\rset^n\to \rset\cup\{+\infty\}$,
$\left\|\,\cdot\,\right\|_{}$ be a norm in $\rset^n$ and $K > 0$.
Function $g$ is said to be strongly convex with modulus $\kappa$ with respect to norm
$\left\|\,\cdot\,\right\|_{}$ if for all $x,x'\in \rset^n$ and $\lambda\in \left[ 0,1 \right]$,
\[ g(\lambda x+(1-\lambda)x')\leqslant \lambda
  g(x)+(1-\lambda)g(x')-\frac{\kappa \lambda(1-\lambda)}{2}\left\| x'-x \right\|_{}^2. \]
\end{definition}



\begin{proposition}[Theorem~23.5 in \cite{rockafellar1970convex}]
\label{prop:subdifferential-inverse}
Let $g:\rset^n\to \rset\cup\{+\infty\}$ be a
lower-semicontinuous convex function with nonempty domain. Then for
all $x,y\in \rset^n$, the following statements are equivalent.
\begin{enumerate}[(i)]
\item $x\in \partial g^*(y)$;
\item $y\in \partial g(x)$;
\item\label{item:fenchel-identity} $\left< y \middle| x \right> =g(x)+g^*(y)$;
\item $x\in \Argmax_{x'\in \rset^n}\left\{ \left< y \middle| x' \right> - g(x')\right\}$;
\item $y\in \Argmax_{y'\in \rset^n}\left\{ \left< y' \middle| x \right> - g^*(y')\right\}$.
\end{enumerate}
\end{proposition}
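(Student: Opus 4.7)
The approach is to show the classical Fenchel-Young equality characterization of the subdifferential and then leverage biconjugation to get symmetry in $(x, g)$ versus $(y, g^*)$. Since (iv) is just (iii) rewritten using the definition of $g^*$, and (v) is the same for $g^{**}$, the core content is (i) $\Leftrightarrow$ (ii) $\Leftrightarrow$ (iii).

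First I would prove (ii) $\Leftrightarrow$ (iii). The subgradient condition $y \in \partial g(x)$ says $g(x')-g(x) \geq \langle y\mid x'-x\rangle$ for every $x'\in \rset^n$, which rearranges to $\langle y\mid x\rangle - g(x) \geq \langle y\mid x'\rangle - g(x')$ for all $x'$. Taking $\sup$ over $x'$ of the right-hand side yields $g^*(y)$, so (ii) is equivalent to $\langle y\mid x\rangle - g(x) \geq g^*(y)$. Combined with the always-valid Fenchel-Young inequality $\langle y\mid x\rangle \leq g(x)+g^*(y)$ (immediate from the definition of $g^*$), this gives equality, which is (iii). Conversely (iii) immediately implies $\langle y\mid x\rangle - g(x) \geq \langle y\mid x'\rangle - g(x')$ for all $x'$, i.e. the subgradient inequality.

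Next, (iii) $\Leftrightarrow$ (iv) is purely by definition: (iii) says $\langle y\mid x\rangle - g(x) = g^*(y) = \sup_{x'}\{\langle y\mid x'\rangle - g(x')\}$, and a supremum is achieved at $x$ precisely when $x$ attains the $\argmax$. The equivalence (iii) $\Leftrightarrow$ (v) is the mirror statement with roles of $g$ and $g^*$ swapped.

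For (i) $\Leftrightarrow$ (iii), the plan is to apply the already-established equivalence (ii) $\Leftrightarrow$ (iii), but to $g^*$ instead of $g$. That gives: $x \in \partial g^*(y)$ iff $\langle y\mid x\rangle = g^*(y) + g^{**}(x)$. Here the main obstacle — really the only nontrivial point in the whole proposition — is to identify $g^{**}$ with $g$, which is the Fenchel-Moreau theorem and is where the hypothesis that $g$ is lower-semicontinuous, convex, with nonempty domain (i.e.\ proper) is crucially used. I would invoke Fenchel-Moreau as a black box: since $g$ is proper, convex and lsc, $g^{**} = g$ pointwise on $\rset^n$. Applying this in the displayed equality gives $\langle y\mid x\rangle = g^*(y)+g(x)$, which is (iii), and reversing the reasoning closes the loop.

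The overall structure is therefore: establish the Fenchel equality characterization of $\partial g$ using only the definitions of subgradient and conjugate (no topological hypotheses needed for this direction), then use Fenchel-Moreau to transport the same characterization to $\partial g^*$. Lower-semicontinuity enters exactly once, through $g^{**}=g$. The propriety assumption (nonempty domain together with $g>-\infty$ everywhere, which follows from existence of a subgradient wherever one appears) is needed to guarantee that $g^*$ is itself proper and that the biconjugation theorem applies; I would remark that if at any point $g(x) = +\infty$ or $g^*(y) = +\infty$ occurred in one of the five statements, the Fenchel-Young inequality would force the corresponding identity to fail, keeping the equivalences intact in the extended-real sense.
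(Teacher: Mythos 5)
The paper gives no proof of this proposition---it is quoted verbatim from Rockafellar's Theorem~23.5---and your argument is precisely the standard one that source uses: derive the Fenchel--Young equality characterization of $\partial g$ directly from the definitions of subgradient and conjugate, then transfer it to $\partial g^*$ via the biconjugation theorem $g^{**}=g$, which is the single place where lower-semicontinuity and properness enter. Your proof is correct; the only cosmetic remark is that the equivalence (iii) $\Leftrightarrow$ (v), which you describe as a ``mirror statement,'' also consumes $g^{**}=g$ and not merely the definition of $\argmax$, exactly as in your treatment of (i) $\Leftrightarrow$ (iii).
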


\section{Postponed proofs}
\label{sec:postponed-proofs}
\subsection{Proofs for Section \ref{sec:MD-DA}}
\label{sec:proofs-2}
\subsubsection{Proof of Proposition~\ref{prop:mirror-map-properties}}
  Let $\vt\in \rset^n$. By property~(\ref{item:gradient-all-values})
  from Definition~\ref{def:mirror-maps},
  there exists $x_1\in \mathcal{D}_F$ such that $\nabla
  F(x_1)=\vt$. Therefore, function $\varphi_{\vt}:x\mapsto \left< \vt \middle|
    x \right> -F(x)$ is differentiable at $x_1$ and $\nabla
  \varphi_{\vt}(x_1)=0$. Moreover, $\varphi_{\vt}$ is strictly concave as a
  consequence of property~(\ref{item:F-lsc-strict-convex}) from
  Definition~\ref{def:mirror-maps}. Therefore, $x_1$ is the unique
  maximizer of $\varphi_{\vt}$ and:
  \[ F^*(\vt)=\max_{x\in \rset^n}\left\{ \left< \vt \middle| x \right> -F(x) \right\}<+\infty,  \]
  which proves property~(\ref{item:F-star-dom}).

  Besides, we have
\begin{equation}
\label{eq:1}
x_1\in \partial F^*(\vt) \quad \Longleftrightarrow \quad \vt=\nabla
    F(x_1) \quad \Longleftrightarrow \quad   \text{$x_1$ minimizer of $\phi_{\vt}$},
\end{equation}
  where the first equivalence comes from
  Proposition~\ref{prop:subdifferential-inverse}. Point $x_1$ being
  the unique maximizer of $\varphi_{\vt}$, we have that $\partial F^*(\vt)$
  is a singleton. In other words, $F^*$ is differentiable in $\vt$ and
\begin{equation}
\label{eq:2}
\nabla F^*(\vt)=x_1\in \mathcal{D}_F.
\end{equation}
  First, the above \eqref{eq:2} proves property~(\ref{item:F-star-diff}). Second, this equality combined with the equality from \eqref{eq:1} gives the second identity from property~(\ref{item:gradient-F-star-inverse}). Third, this proves that $\nabla F^*(\rset^n)\subset \mathcal{D}_F$.

  It remains to prove the reverse inclusion to get
  property~(\ref{item:F-star-image}). Let $x\in \mathcal{D}_F$. By
  property~(\ref{item:F-diff}) from Definition~\ref{def:mirror-maps},
  $F$ is differentiable in $x$. Consider
\begin{equation}
\label{eq:3}
\vt:=\nabla F(x),
\end{equation} and all the
  above holds with this special point $\vt$. In particular, $x_1=x$ by
  uniqueness of $x_1$. Therefore \eqref{eq:2} gives
\begin{equation}
\label{eq:4}
\nabla F^*(\vt)=x,
\end{equation}
  and this proves $\nabla F^*(\rset^n)\supset \mathcal{D}_F$ and
  thus property~(\ref{item:F-star-image}). Combining \eqref{eq:3} and
  \eqref{eq:4} gives the first identity from property~(\ref{item:gradient-F-star-inverse}).

\subsubsection{Proof of Theorem~\ref{thm:bregman-projection}}

   Let $x_0\in \mathcal{D}_F$. By definition of the mirror map, $F$ is
  differentiable at $x_0$. Therefore, $D_F(x,x_0)$ is well-defined for
  all $x\in \rset^n$.

  For all real value $\alpha\in \rset$,
  consider the sublevel set $S_{\X}(\alpha)$ of function $x\mapsto
  D_F(x,x_0)$ associated with value $\alpha$ and restricted to $\X$:
  \[ S_{\X}(\alpha):=\left\{ x\in \X\,\colon D_F(x,x_0)\leqslant \alpha \right\}. \]
    Inheriting properties from $F$, function $D_F(\,\cdot\,,x_0)$ is
  lower-semicontinuous and strictly convex: consequently, the
  sublevel sets $S_{\X}(\alpha)$ are closed and convex.

 Let us also prove that the sublevel sets $S_{\X}(\alpha)$ are
 bounded. For each value $\alpha\in \rset$, we write
 \[ S_{\X}(\alpha)\subset S_{\rset^n}(\alpha):=\left\{ x\in \rset^n\,\colon \,D_F(x,x_0)\leqslant \alpha \right\} \]
 and aim at proving that the latter set is bounded. By contradiction,
 let us suppose that there exists an unbounded sequence in
 $S_{\rset^n}(\alpha)$: let $(x_k)_{k\geqslant 1}$ be such that $0<\left\| x_k-x_0 \right\|_{}\xrightarrow[k \rightarrow
 +\infty]{}+\infty$ and $D_F(x_k,x_0)\leqslant \alpha$ for all
 $k\geqslant 1$. Using the Bolzano--Weierstrass theorem, there exists
 $v\neq 0$ and a subsequence $(x_{\phi(k)})_{k\geqslant 1}$ such that
 \[ \frac{x_{\phi(k)}-x_0}{\left\| x_{\phi(k)}-x_0 \right\|
   }\xrightarrow[k \rightarrow +\infty]{}v. \]
 The point $x_0+\frac{x_{\phi(k)}-x_0}{\left\| x_{\phi(k)}-x_0 \right\|}$ being a convex combination of $x_0$ and $x_{\phi(k)}$, we can write the
 corresponding convexity inequality for function $D_F(\,\cdot\,,x_0)$:
 \begin{align*}
D_F\left( x_0+\lambda_k(x_{\phi(k)}-x_0),x_0 \right)&\leqslant
   (1-\lambda_k)D_F(x_0,x_0) +\lambda_kD_F(x_{\phi(k)},x_0 )\\
&\leqslant \lambda_k\alpha \xrightarrow[k \rightarrow +\infty]{}0,
\end{align*}
where we used shorthand $\lambda_k:=\left\|  x_{\phi(k)}-x_0 \right\|^{-1}$. For the first above inequality, we used $D_F(x_0,x_0)=0$ and that
$D_F(x_{\phi(k)},x_0)\leqslant \alpha$ by definition of
$(x_k)_{k\geqslant 1}$. Then, using the lower-semicontinuity of
$D_F(\,\cdot\,,x_0)$ and the fact that
$x_0+\lambda_k(x_{\phi(k)}-x_0) \xrightarrow[k \rightarrow
+\infty]{}x_0+v$, we have
\[ D_F(x_0+v,x_0)\leqslant \liminf_{k\to +\infty}D_F(x_0+\lambda_k(x_{\phi(k)}-x_0),x_0)\leqslant \liminf_{k\to +\infty}\lambda_k\alpha=0. \]
The Bregman divergence of a convex function being nonnegative, the
above implies $D_F(x_0+v,x_0)=0$. Thus, function
$D_F(\,\cdot\,,x_0)$ attains its minimum ($0$) at two different points
(at $x_0$ and at $x_0+v$): this contradicts its strong convexity. Therefore, sublevel sets $S_{\X}(\alpha)$ are bounded and thus compact.

We now consider the value $\alpha_{\text{inf}}$ defined as
\[ \alpha_{\text{inf}}:=\inf\left\{
    \alpha\,\colon S_{\X}(\alpha)\neq \emptyset \right\}. \]
In other words, $\alpha_{\text{inf}}$ is the infimum value of
$D_F(\,\cdot\,,x_0)$ on $\X$, and thus the
only possible value for the minimum (if it exists).
We know that $\alpha_{\text{inf}} \geqslant 0$ because the Bregman
divergence is always nonnegative.
From the definition of the sets $S_{\X}(\alpha)$, it easily
follows that:
\[ S_{\X}(\alpha_{\text{inf}})=\bigcap_{\alpha > \alpha_{\text{inf}}}^{}S_{\X}(\alpha). \]
Naturally, the sets $S_{\X}(\alpha)$ are increasing in
$\alpha$ with respect to the inclusion order. Therefore,
$S_{\X}(\alpha_{\text{inf}})$ is the intersection of a
nested sequence of nonempty compact sets. It is thus nonempty
as well by Cantor's intersection theorem. Consequently,
$D_F(\,\cdot\,,x_0)$ does admit a minimum on $\X$, and the
minimizer is unique because of the strong convexity.

Let us now prove that the minimizer $x_*:=\argmin_{x\in \X}D_F(x,\ x_0)$ also belongs to $\mathcal{D}_F$.
Let us assume by contradiction that $x_*\in \X\setminus
\mathcal{D}_F$. By definition of the mirror map, $\X\cap
\mathcal{D}_F$ is nonempty; let $x_1\in \X\cap
\mathcal{D}_F$.
The set $\mathcal{D}_F$ being open by definition, there exists
$\varepsilon > 0$ such that the closed Euclidean ball
$\bar{B}(x_1,\varepsilon)$ centered in $x_1$ and of radius
$\varepsilon$ is a subset of $\mathcal{D}_F$.
We consider the convex hull
\[ \mathcal{C}:=\operatorname{co}\left( \left\{ x_* \right\}\cup
  \bar{B}(x_1,\varepsilon) \right), \] which is clearly is a compact set.

Consider function $G$ defined by:
\[ G(x):=D_F(x,x_0)=F(x)-F(x_0)-\ps{\nabla F(x_0)}{x-x_0}, \]
so that $x_*$
is the minimizer of $G$ on $\X$. In particular, $G$ is finite in
$x_*$.  $G$ inherits strict convexity, lower-semicontinuity, and differentiability on
$\mathcal{D}_F$ from function $F$. $G$ is continuous on the compact set
$\bar{B}(x_1,\varepsilon)$ because $G$ is convex on the open set
$\mathcal{D}_F\supset \bar{B}(x_1,\varepsilon)$.  Therefore, $G$ is
bounded on $\bar{B}(x_1,\varepsilon)$.  Let us prove that $G$ is also
bounded on $\mathcal{C}$. Let $x\in \mathcal{C}$. By definition of
$\mathcal{C}$, there exists $\lambda\in [0,1]$ and
$x'\in \bar{B}(x_1,\varepsilon)$ such that
$x=\lambda x_*+(1-\lambda)x'$.  By convexity of $G$, we have:
\[ G(x)\leqslant \lambda G(x_*)+(1-\lambda)G(x')\leqslant G(x_*)+G(x'). \] We know that
$G(x_*)$ is finite and that $G(x')$ is bounded for
$x'\in \bar{B}(x_1,\varepsilon)$.  Therefore $G$ is bounded on
$\mathcal{C}$: let us denote $G_{\text{max}}$ and $G_{\text{min}}$
some upper and lower bounds for the value of $G$ on $\mathcal{C}$.

Because $\X$ is a convex set, the segment
$[x_*,x_1]$ (in other words the convex hull of $\left\{ x_*,x_1
\right\}$) is a subset of $\X$. Besides, let us prove that
the set
\[ (x_*,x_1]:=\left\{ (1-\lambda)x_*+\lambda x_1\,\colon \lambda\in (0,1] \right\} \]
is a subset of $\mathcal{D}_F$. Let
$x_{\lambda}:=(1-\lambda)x_*+\lambda x_1$ (with $\lambda\in (0,1]$) a
point in the above set, and let us prove that it belongs to
$\mathcal{D}_F$. By definition of the mirror map, we have
$\X\subset \operatorname{cl}\mathcal{D}_F$, and
besides $x_*\in \X$ by definition. Therefore, there exists a sequence $(x_k)_{k\geqslant 1}$ in
$\mathcal{D}_F$ such that $x_k\to x_*$ as $k\to +\infty$. Then, we can
write
\begin{align*}
x_{\lambda}&=(1-\lambda)x_*+\lambda x_1\\
&=(1-\lambda)x_k + (1-\lambda)(x_*-x_k)+\lambda x_1\\
&=(1-\lambda)x_k+\lambda\left( x_1+\frac{1-\lambda}{\lambda}(x_*-x_k) \right).
\end{align*}
Since $x_k\to x_*$, for high enough $k$, the point
$x_1+(1-\lambda)\lambda^{-1}(x_*-x_k)$ belongs to
$\bar{B}(x_1,\varepsilon)$ and therefore to $\mathcal{D}_F$. Then,
the point $x_{\lambda}$ belongs to the convex set\footnote{The domain
  of a convex function is convex, and therefore
  $\mathcal{D}_F=\operatorname{int}\operatorname{dom}F$ is convex as
  the interior of a convex set.} $\mathcal{D}_F$ as
the convex combination of two points in $\mathcal{D}_F$. Therefore,
$(x_*,x_1]$ is indeed a subset of $\mathcal{D}_F$.

\begin{figure}
  \centering
  \label{fig:x_lambda}
  \begin{tikzpicture}[scale=4,rotate=270]
    \draw[thin,dotted] (0,0) node{$\bullet$} node[left]{$x_*$} --
    (0,1) node{$\bullet$} node[below]{$x_{\lambda}$} -- (0,2)
    node{$\bullet$} node[above right]{$x_1$};
    \draw[thick] (0,2) circle (.6);
    \draw[thick] (0,0) -- ({acos(.6/2)}:{sqrt(2^2-.6^2)});
    \draw[thick] (0,0) -- ({180-acos(.6/2)}:{sqrt(2^2-.6^2)});
    \draw {(0,2)+(135:.6)} node  {$\bullet$} node[above
    right]{$x_1+\varepsilon u$};

    \draw[<->,>=latex] {(0,2)+(45:.6)} -- node[midway,above right]{$\varepsilon$} (0,2);

    \draw[thin,dotted] {(0,1)+(135:.6/2)} node{$\bullet$}
    node[left]{$x_{\lambda}+\lambda\varepsilon u$}
-- node[midway]{$\bullet$} node[midway,left]{$x_{\lambda}+\lambda hu$} (0,1);
\draw[thin,dotted] {(0,2)+(135:.6)} -- (0,0);
  \end{tikzpicture}
\end{figure}
$G$ being differentiable on $\mathcal{D}_F$ by definition of the mirror
map, the gradient of $G$ exists at each point of $(x_*,x_1]$. Let us
prove that $\nabla G$ is bounded on $(x_*,x_1]$. Let $x_{\lambda}\in
(x_*,x_1]$, where $\lambda\in (0,1]$ is such that
\[ x_{\lambda}= (1-\lambda)x_*+\lambda x_1, \]
and let $u\in \rset^n$ such that $\left\| u \right\|_2=1$.
The point $x_1+\varepsilon u$ belongs to $\mathcal{C}$ because it
belongs to $\bar{B}(x_1,\varepsilon)$. The following point also
belongs to convex set $\mathcal{C}$ as the convex combination of $x_*$
and $x_1+\varepsilon u$ which both belong to $\mathcal{C}$:
\begin{equation}
\label{eq:5}
x_{\lambda}+\lambda \varepsilon u = (1-\lambda)x_*+\lambda(x_1+\varepsilon u)\in \mathcal{C}.
\end{equation}
Let $h\in (0,\varepsilon]$. The following point also belongs to
$\mathcal{C}$ as a convex combination of $x_{\lambda}$ and the above
point $x_{\lambda}+\lambda\varepsilon u$:
\begin{equation}
\label{eq:7}
x_{\lambda}+\lambda hu = \left( 1-\frac{h}{\varepsilon}
\right)x_{\lambda}+\frac{h}{\varepsilon}\left( x_{\lambda}+\lambda
  \varepsilon u \right)\in \mathcal{C}.
\end{equation}
Now using for $G$ the convexity inequality associated with the convex
combination from \eqref{eq:7}, we write:
\begin{align}
  \begin{split}
  \label{eq:8}
G(x_{\lambda}+h\lambda u)-G(x_{\lambda})&\leqslant
  \frac{h}{\varepsilon}\left( G(x_{\lambda}+\lambda\varepsilon
    u)-G(x_{\lambda}) \right)\\
  &=\frac{h}{\varepsilon}\left( G(x_{\lambda}+\lambda\varepsilon
    u)-G(x_*)+G(x_*)-G(x_{\lambda}) \right)\\
  &\leqslant \frac{h}{\varepsilon}\left(
    G(x_{\lambda}+\lambda\varepsilon u)-G(x_*) \right),
  \end{split}
\end{align}
where for the last line we used $G(x_*)\leqslant G(x_{\lambda})$ which
is true because $x_{\lambda}$ belongs to $\X$ and $x_*$ is by
definition the minimizer of $G$ on $\X$. Using the convexity
inequality associated with the convex combination from \eqref{eq:5},
we also write
\begin{align}
  \label{eq:9}
  \begin{split}
G(x_{\lambda}+\lambda\varepsilon u)-G(x_*)&\leqslant \lambda\left(
  G(x_1+\varepsilon u)-G(x_*) \right)\\
&\leqslant \lambda\left( G_{\text{max}}-G_{\text{min}} \right).
  \end{split}
\end{align}
Combining \eqref{eq:8} and \eqref{eq:9} and dividing by $h\lambda$, we
get
\[ \frac{G(x_{\lambda}+h\lambda u)-G(x_{\lambda})}{h\lambda}\leqslant \frac{G_{\text{max}}-G_{\text{min}}}{\varepsilon}. \]
Taking the limit as $h\to 0^+$, we get that $\left< \nabla
  G(x_{\lambda}) \middle| u \right> \leqslant
(G_{\text{max}}-G_{\text{min}})/\varepsilon$.
This being true for all vector $u$ such that $\left\| u \right\|_2=1$,
we have
\[ \left\| \nabla G(x_{\lambda}) \right\|_{2}=\max_{\left\| u \right\|_{2}=1 }\left< \nabla G(x_{\lambda}) \middle| u \right>\leqslant \frac{G_{\text{max}}-G_{\text{min}}}{\varepsilon}.  \]
As a result, $\nabla G$ is bounded on $(x_*,x_1]$.

Let us deduce that $\partial G(x_*)$ is nonempty.
The sequence $(\nabla G(x_{1/k}))_{k\geqslant 1}$ is bounded. Using
the Bolzano--Weierstrass theorem, there exists a subsequence $(\nabla
G(x_{1/\phi(k)}))_{k\geqslant 1}$ which converges to some vector
$\vt_*\in \rset^n$. For each $k\geqslant 1$, the following is
satisfied by convexity of $G$:
\[ \left< \nabla G(x_{1/\phi(k)}) \middle| x-x_{1/\phi(k)} \right>
  \leqslant G(x)-G(x_{1/\phi(k)}),\quad x\in \rset^n. \] Taking the
limsup on both sides for each $x\in \rset^n$ as $k\to +\infty$, we get
(because obviously $x_{1/\phi(k)}\to x_*$):
\[ \left< \vt_* \middle| x-x_* \right> \leqslant G(x)-\liminf_{k\to +\infty}G(x_{1/\phi(k)})\leqslant G(x)-G(x_*),\quad x\in \rset^n, \]
where the second inequality follows from the lower-semicontinuity of $G$. Consequently, $\vt_*$ belongs to $\partial G(x_*)$.

But by definition of the mirror map $\nabla F$ takes all possible
values and so does $\nabla G$, because it follows from the definition
of $G$ that $\nabla G=\nabla F-\nabla F(x_0)$. Therefore,
there exists a point $\tilde{x}\in \mathcal{D}_F$ (thus $\tilde{x}
\neq x_*$) such that $\nabla G(\tilde{x})=\vt_*$. Considering the point
$x_{\text{mid}}=\frac{1}{2}(x_*+\tilde{x})$, we can write the
following convexity inequalities:
\begin{align*}
\left< \vt_* \middle| x_{\text{mid}}-x_* \right> &\leqslant
                                                 G(x_{\text{mid}})-G(x_*)\\
\left< \vt_* \middle| x_{\text{mid}}-\tilde{x} \right> &\leqslant G(x_{\text{mid}})-G(\tilde{x}).
\end{align*}
We now add both inequalities and use the fact that
$x_{\text{mid}}-\tilde{x}=x_*-x_{\text{mid}}$ by definition of
$x_{\text{mid}}$ to get $0\leqslant
2G(x_{\text{mid}})-G(x_*)-G(\tilde{x})$, which can also be written
\[ G\left( \frac{x_*+\tilde{x}}{2} \right)\geqslant \frac{G(x_*)+G(\tilde{x})}{2}, \]
which contradicts the strong convexity of $G$. We conclude that $x_*\in \mathcal{D}_F$.

\subsubsection{Proof of Proposition~\ref{prop:sufficient-conditions-dom-h-star-R}}

Let $\vt\in \rset^n$. For each of the three assumptions, let us prove
that $h^*(\vt)$ is finite. This will prove that $\operatorname{dom}h^*=\rset^n$.
\begin{enumerate}[(i)]
\item Because $\operatorname{cl}\operatorname{dom}h=\X$ by definition
  of a pre-regularizer, we have:
  \[ h^*(\vt)=\max_{x\in \rset^n}\left\{ \ps{\vt}{x}-h(x)
    \right\}=\max_{x\in \X}\left\{ \ps{\vt}{x}-h(x) \right\}. \]
  Besides, the function $x\mapsto \ps{\vt}{x}-h(x)$ is
  upper-semicontinuous and therefore attains a maximum on $\X$ because
  $\X$ is assumed to be compact. Therefore $h^*(\vt)<+\infty$.
\item Because $\nabla h(\mathcal{D}_h)=\rset^n$ by assumption, there
  exists $x\in \mathcal{D}_h$ such that $\nabla h(x)=\vt$. Then, by
  Proposition~\ref{prop:subdifferential-inverse}, $h^*(\vt)=\ps{\vt}{x}-h(x)<+\infty$.
\item The function $x\mapsto \ps{\vt}{x}-h(x)$ is strongly concave on
  $\rset^n$ and therefore admits a maximum. Therefore, $h^*(\vt)<+\infty$.
\end{enumerate}

\subsubsection{Proof of Proposition~\ref{prop:differentiability-h-star}}

  Let $\vt\in \rset^n$.
Because $\operatorname{dom}h^*=\rset^n$, the subdifferential $\partial
h^*(\vt)$ is nonempty---see e.g.\ \cite[Theorem~23.4]{rockafellar1970convex}.
  By Proposition~\ref{prop:subdifferential-inverse}, $\partial
  h^*(\vt)$ is the set of maximizers of function $x\mapsto
  \ps{\vt}{x}-h(x)$, which is strictly concave. Therefore, the
  maximizer is unique and $h^*$ is differentiable at $\vt$.

  Let $x\in \mathcal{D}_F$ and let us prove that $\nabla F(x)\in \partial h(x)$.
By convexity of $F$, the following is true
\[ \forall x'\in \rset^n,\quad F(x')-F(x)\geqslant \left< \nabla F(x) \middle| x'-x \right>. \]
By definition of $h$, we obviously have $h(x')\geqslant F(x')$ for all
$x'\in \rset^n$, and $h(x)=F(x)+I_{\X}(x)=F(x)$
because $x\in \X$. Therefore, the following is also true
\[ \forall x'\in \rset^n,\quad h(x')-h(x)\geqslant \left< \nabla F(x) \middle| x'-x \right>. \]
In other words, $\nabla F(x)\in \partial f(x)$.

\subsubsection{Proof of Proposition~\ref{prop:F+I_X}}

$h$ is strictly convex as the sum of two convex functions, one of
which ($F$) is strictly convex. $h$ is lower-semicontinuous as the sum
of two lower-continuous functions.

Let us now prove that
$\operatorname{cl}\operatorname{dom}h=\X$. First, we write
\[ \operatorname{dom}h=\operatorname{dom}(F+I_{\X})=\operatorname{dom}F\cap \operatorname{dom}I_{\X}=\operatorname{dom}F\cap \X. \]
Let $x\in
\operatorname{cl}\operatorname{dom}h=\operatorname{cl}(\operatorname{dom}F\cap
\X)$. There exists a sequence $(x_k)_{k\geqslant 1}$ in
$\operatorname{dom}F\cap \X$ such that $x_k\to x$.
In particular, each $x_k$ belongs to closed set $\X$, and so does the limit: $x\in \X$.

Conversely, let $x\in \X$ and let us prove that $x\in
\operatorname{cl}(\operatorname{dom}F\cap \X)$ by
constructing a sequence $(x_k)_{k\geqslant 1}$ in $\operatorname{dom}F\cap
\X$ which converges to $x$. By definition of the mirror map,
we have $\X\subset \operatorname{cl}\mathcal{D}_F$, where $\mathcal{D}_F:=\operatorname{int}\operatorname{dom}F$. Therefore,
there exists a sequence $(x_l')_{l\geqslant 1}$ in $\mathcal{D}_F$ such
that $x_l'\to x$ as $l\to +\infty$. From the definition of the mirror map, we also have
that $\X\cap \mathcal{D}_F\neq \emptyset$. Let $x_0\in
\X\cap \mathcal{D}_F$. In particular, $x_0$ belongs
$\mathcal{D}_F$ which is an open set by definition. Therefore, there
exists a neighborhood $U\subset \mathcal{D}_F$ of point $x_0$. We now
construct the sequence $(x_k)_{k\geqslant 1}$ as follows:
\[ x_k:=\left( 1-\frac{1}{k} \right)x+\frac{1}{k}x_0,\quad k\geqslant 1. \]
$x_k$ belongs to $\X$ as the convex combination of two points
in the convex set $\X$, and obviously converges to $x$. Besides, $x_k$ can also be written, for any $k,l\geqslant 1$,
\begin{align*}
x_k&= \left( 1-\frac{1}{k} \right)x'_l+\left( 1-\frac{1}{k} \right)(x-x_l')+\frac{1}{k}x_0 \\
  &=\left( 1-\frac{1}{k} \right)x_l'+ \frac{1}{k}\left(
     x_0+(k-1)(x-x_l') \right)\\
&=\left( 1-\frac{1}{k} \right)x_l'+\frac{1}{k}x_{0}^{(kl)},
\end{align*}
where we set $x_0^{(kl)}:=x_0+(k-1)(x-x_l')$. For a given $k\geqslant
1$, we see that $x_0^{(kl)}\to x_0$ as $l\to +\infty$ because $x_l'\to
x$ by definition of $(x_l')_{l\geqslant 1}$. Therefore, for
large enough $l$, $x_0^{(kl)}$ belongs to the neighborhood $U$ and
therefore to $\mathcal{D}_F$. $x_k$ then appears as the convex
combination of $x_l'$ and $x_0^{(kl)}$ which both belong to the convex
set $\mathcal{D}_F\subset \operatorname{dom}F$. $(x_k)$ is thus
a sequence in $\operatorname{dom}F\cap \X$ which converges
to $x$. Therefore, $x\in \operatorname{cl}(\operatorname{dom}F\cap
\X)$ and $h$ is an $\X$-pre-regularizer.

Finally, we have $F\leqslant h$ by definition of $h$. One can easily
check that this implies $h^*\leqslant F^*$ and we know from
Proposition~\ref{prop:mirror-map-properties} that
$\operatorname{dom}F^*=\rset^n$, in other words that $F^*$ only
takes finite values. Therefore, so does $h^*$ and $h$ is an $\X$-regularizer.

\subsection{Proofs for Section \ref{sec:guar-conv-nonc}}

\subsubsection{Proof of Proposition \ref{prop:quasi-monotone}}
  Let $t\geqslant 2$.
It follows from  the definition of the iterates that
$x_t-y_t=(\nu_{t-1}^{-1}-1)(y_t-y_{t-1})$. Therefore, utilizing the
convexity of $f$, we get
\begin{align*}
\left< \gamma_tf'(y_t) \middle| x_t-x_* \right>
  &=\gamma_t\left< f'(y_t) \middle| y_t-x_* \right>
    +\gamma_t\left< f'(y_t) \middle| x_t-y_t \right>\\
&=\gamma_t\left< f'(y_t) \middle| y_t-x_* \right>
                                                                  +\gamma_t(\nu_{t-1}^{-1}-1)\left< f'(y_t) \middle| y_t-y_{t-1} \right>\\
&\geqslant \gamma_t\left( f(y_t)-f_*
                                                                                                                                                       \right)+\gamma_t(\nu_{t-1}^{-1}-1)\left(
                                                                                                                                                       f(y_t)-f(y_{t-1})
                                                                                                                                                       \right)
  \\
  &= \gamma_t\nu_{t-1}^{-1}f(y_t)-\gamma_t(\nu_{t-1}^{-1}-1)f(y_{t-1})-\gamma_tf_*.
\end{align*}
Besides this, for $t=1$, we have $\gamma_1\left< f'(y_1) \middle| x_1-x_*
\right> \geqslant \gamma_1(f(y_1)-f_*)$ because $x_1=y_1$ by definition.
Then, summing over $t=1,\dots,T$, we obtain after simplifications:
\begin{multline*}
(\gamma_1-\gamma_2(\nu_1^{-1}-1))f(y_1)+\sum_{t=2}^{T-1}(\gamma_t\nu_{t-1}^{-1}-\gamma_{t+1}(\nu_t^{-1}-1))f(y_t)+\gamma_T\nu_{t-1}^{-1}f(y_T)\\
-\left( \sum_{t=1}^T\gamma_t \right)f_*\leqslant \sum_{t=1}^T\left< \gamma_tf'(y_t) \middle| x_t-x_* \right> .
\end{multline*}
Using the definition of coefficients $\nu_t$, the above left-hand side
simplifies to result in the inequality
\[ \left( \sum_{t=1}^T\gamma_t \right)\left( f(y_T)-f_* \right)\leqslant \sum_{t=1}^T\left< \gamma_tf'(y_t) \middle| x_t-x_* \right>. \]
Finally, because $(x_t,\vt_t)_{t\geqslant 1}$ is a sequence of UMD$(h,\xi)$ iterates
with dual increments $\xi:=(-\gamma_tf'(y_t))_{t\geqslant 1}$,
the result then follows by applying inequality \eqref{eq:regret-bound-norm} from Corollary~\ref{cor:regret-bound} and dividing by $\sum_{t=1}^T\gamma_t$.\qed

\subsubsection{Proof of Theorem \ref{thm:smooth-convex}}
First, observe that whenever $\gamma_t\leq 1/L$, due to \eqref{eq:lip-grad},
\be f(x_{t+1})-f(x_{t})-\ps{\nabla f(x_t)}{x_{t+1}-x_t}&\leq& {L\over 2}\|x_{t+1}-x_t\|^2\nonumber\\&\leq& (2\gamma_t)^{-1}\|x_{t+1}-x_t\|^2.
\ee{eq:lip2}
Thus,
\begin{align*}
\gamma_t D_f(x_{t+1},x_t)&=\gamma_t[f(x_{t+1})-f(x_{t})-\ps{f'(x_t)}{x_{t+1}-x_t}]\\
&\leq \half\|x_{t+1}-x_t\|^2\\&\leq D_h(x_{t+1},x_t;\ \vt_t).
\end{align*}
by the strong convexity of $D_h$. On the other hand,  by \eqref{eq:3-point} of Lemma \ref{lm:3-point}, for any $x\in \X\cap \operatorname{dom}h$,
\begin{align*}
D_h(x,x_{t+1}; \vt_{t+1})&\leqslant D_h(x,x_t;\vt_t)+\gamma_t\ps{\nabla f(x_t)}{x-x_{t+1}}-D_h(x_{t+1},x_t; \vt_t)\\
\mbox{[by
  \eqref{eq:back}]}&\leq D_h(x,x_t;\vt_t)+\gamma_t\ps{\nabla f(x_t)}{x-x_{t}}\\
                         &\qquad -\gamma_t\ps{\nabla f(x_t)}{x_{t+1}-x_{t}}-D_f(x_{t+1},x_t)\\
\mbox{[due to \eqref{eq:lip2}]}&\leq D_h(x,x_t;\vt_t)+\gamma_t\ps{\nabla f(x_t)}{x-x_{t}}-\gamma_t[f(x_{t+1}-f(x_t)]\\
\mbox{[by convexity of $f$]}
&\leq D_h(x,x_t;\vt_t)-\gamma_t(f(x_{t+1})-f(x)).
\end{align*}
Consequently, $\forall x\in \X\cap \operatorname{dom}h$,
\[
\gamma_t(f(x_{t+1})-f(x_t))\leq D_h(x,x_t;\vt_t)-D_h(x,x_{t+1}; \vt_{t+1}).
\]
When applying the above inequality to $x=x_t$ we conclude that \[\gamma_t(f(x_{t+1})-f(x_t))\leq -D_h(x_t,x_{t+1}; \vt_{t+1})\leq 0.\]
Finally, when setting $x=x_*$, we obtain
\[
\left(\sum_{t=1}^T\gamma_t\right) (f(x_{T+1})-f_*)\leq \sum_{t=1}^T\gamma_t(f(x_{t+1})-f(x_*))\leq D_h(x_*,x_1;\vt_1)
\]
which implies \eqref{eq:smooth_0}. \qed

\subsubsection{Proof of Theorem \ref{thm:acceleration}}

We start with the following technical result.
\begin{lemma}\label{lemma:lan}
Assume that positive step-sizes $\nu_t\in(0,1]$ and $\gamma_t>0$ are such that
the relationship
\be
f({z}_{t+1})\leq f(y_t)+\nu_t\ps{\nabla f(y_t)}{{x}_{t+1}-x_t}+{\nu_t\over \gamma_t} D_h({x}_{t+1},{x}_t;\vt_t),
\ee{eq: keep_nest1}
holds for all $t$ which is certainly the case if $\nu_t\gamma_t\leq L^{-1}$. Denote $s_t=f(z_t)-f_*$; then
\begin{align}
{\gamma_t\nu_t^{-1}}(s_{t+1}-s_t)+\gamma_ts_t&\leq D_h(x_*,{x}_t;\vt_t)-D_h(x_*,{x}_{t+1};\vt_{t+1}).
\label{eq: lai1}
\end{align}
\end{lemma}
\noindent {\bf Proof of the lemma.}
Observe first that by construction,
\[
{z}_{t+1}-y_t= (1-\nu_t){z}_t+\nu_t{x}_{t+1}-[(1-\nu_t){z}_t+\nu_t{x}_{t}]=\nu_t({x}_{t+1}-{x}_t)
\]
By strong convexity of $h$, for $\nu_t\gamma_t\leq L^{-1}$ we have
\begin{align*}
f({z}_{t+1})&\leq f(y_t)+\langle \nabla f(y_t),{z}_{t+1}-y_t\rangle+{L\over 2}\|{z}_{t+1}-y_t\|^2\nn
&= f(y_t)+\nu_t\langle \nabla f(y_t),{x}_{t+1}-x_t\rangle+{L\nu_t^2\over 2}\|{x}_{t+1}-{x}_t\|^2\nn
&\leq f(y_t)+\nu_t\langle \nabla f(y_t),{x}_{t+1}-x_t\rangle+{\nu_t\over \gamma_t}D_h({x}_{t+1},{x}_t;\vt_t),
\end{align*}
what is \eqref{eq: keep_nest1}.

Next, observe that by \eqref{eq:acc-y},
\[
\nu_t(x_*-x_t)=(\nu_tx_*+(1-\nu_t)z_t)-y_t,
\]
whence, by convexity of $f$,
\begin{align*}
\nu_t\ps{\nabla f(y_t)}{x_*-x_t}&=\ps{\nabla f(y_t)}{(\nu_tx_*+(1-\nu_t)z_t)-y_t}\\
                                &\leq f(\nu_tx_*+(1-\nu_t)z_t)-f(y_t)\\
  &\leq \nu_t(f(x_*)-f(y_t))+(1-\nu_t)(f(z_t)-f(y_t)).
\end{align*}
When substituting the latter bound into \eqref{eq: keep_nest1} we get
\begin{align*}
f({z}_{t+1})&\leq f(y_t)+\nu_t\ps{\nabla f(y_t)}{{x}_{t+1}-x_*}+\nu_t(f(x_*)-f(y_t))\\&\qquad +(1-\nu_t)(f(z_t)-f(y_t))
+{\nu_t\over \gamma_t} D_h({x}_{t+1},{x}_t;\vt_t),
\end{align*}
or
\begin{align*}
f({z}_{t+1})-f(z_t)\leq \nu_t\ps{\nabla f(y_t)}{{x}_{t+1}-x_*}+\nu_t(f_*-f(z_t))
+{\nu_t\over \gamma_t} D_h({x}_{t+1},{x}_t;\vt_t).
\end{align*}
Now, because $(x_t,\vt_t)_{t\geqslant 1}$ is a sequence of UMD iterates,  by  \eqref{eq:3-point} of Lemma \ref{lm:3-point},
\[
\gamma_t\ps{\nabla f(y_t)}{{x}_{t+1}-x_*}\leq D_h(x_*,{x}_t;\vt_t)-D_h(x_*,{x}_{t+1};\vt_{t+1})-D_h({x}_{t+1},{x}_t;\vt_t),
\]
and we arrive at
\[
{\gamma_t\nu_t^{-1}}(f({z}_{t+1})-f(z_t))\leq D_h(x_*,{x}_t;\vt_t)-D_h(x_*,{x}_{t+1};\vt_{t+1})+\gamma_t(f_*-f(z_t)),
\]
what is \eqref{eq: lai1}.\qed 
\paragraph{Proof of the theorem.} Assume that $\nu_t$ and $\gamma_t$ satisfy
\be
\nu_1=1,\quad \nu_t\in (0,1], \quad\gamma_{t+1}(\nu_{t+1}^{-1}-1)\leq \gamma_t\nu_{t}^{-1}.
\ee{eq: keep_nest0}
When summing \eqref{eq: lai1} up from 1 to $T$ we get
\begin{align*}
D_h(x_*,x_t;\vt_1)&\geq \sum_{t=1}^T[{\gamma_t\nu_t^{-1}}(s_{t+1}-s_t)+\gamma_ts_t]\\
&={\gamma_T\nu_T^{-1}}s_{T+1}+\sum_{t=2}^Ts_t\left({\gamma_{t-1}\nu_{t-1}^{-1}}-\gamma_t(\nu_t^{-1}-1)\right)-\gamma_1(\nu_1^{-1}-1)s_1\\
&\!\!\!\!\!\!\!\!\underbrace{\geq}_{\mbox{[by \eqref{eq: keep_nest0}]}} {\gamma_T\nu_T^{-1}}s_{T+1}={\gamma_T\nu_T^{-1}}(f(z_{T+1})-f_*).
\end{align*}
It is clear that the choice of $\gamma_1=L^{-1}$, $\nu_1=1$ and $\nu_t=(\gamma_tL)^{-1}$ satisfies the relationship $\gamma_t\nu_t\leq L^{-1}$.
In this case, when choosing step-sizes $(\gamma_t)_{t\geq 1}$ to saturate recursively the last relation in \eqref{eq: keep_nest0}, specifically,
\[
\gamma^2_{t+1}L-\gamma_{t+1}= \gamma_t^2L
\]
we come to celebrated Nesterov step-sizes \eqref{eq:nest-steps} which satisfy $\gamma_t\nu_t^{-1}\geq {(t+1)^2\over 4L}$, and we arrive at \eqref{eq:nest-bound}.\qed

\end{document}